\def\car{\circlearrowleft}
\def\cL{\mathcal{L}}
\def\cC{\mathcal{C}}
\def\cF{\mathcal{F}}
\def\create{\mathsf{create}}
\def\labl{\mathsf{label}}
\def\trim{\mathsf{trim}}
\begin{document}

\title{Distance Labeling for Families of Cycles\thanks{The first author is supported by the grant MPM no. ERC 683064 under the
 EU's Horizon 2020 Research and Innovation Programme and by the State of Israel through the Center for Absorption in Science of the Ministry of Aliyah and Immigration.}}

%


\author{Arseny M. Shur\inst{1} \and Mikhail Rubinchik\inst{2}}

\institute{Bar Ilan University, Ramat Gan, Israel\\ \email{shur@datalab.cs.biu.ac.il} \and
SPGuide Online School in Algorithms\\
\email{mikhail.rubinchik@gmail.com} }
\maketitle

\begin{abstract}
For an arbitrary finite family of graphs, the distance labeling problem asks to assign labels to all nodes of every graph in the family in a way that allows one to recover the distance between any two nodes of any graph from their labels.
The main goal is to minimize the number of unique labels used.
We study this problem for the families $\cC_n$ consisting of cycles of all lengths between 3 and $n$. 
We observe that the exact solution for directed cycles is straightforward and focus on the undirected case.
We design a labeling scheme requiring $\frac{n\sqrt{n}}{\sqrt{6}}+O(n)$ labels, which is almost twice less than is required by the earlier known scheme. 
Using the computer search, we find an optimal labeling for each $n\le 17$, showing that our scheme gives the results that are very close to the optimum.
\vspace*{-2mm}
\keywords{Distance labeling \and Graph labeling \and Cycle}
\end{abstract}


\section{Introduction}
\emph{Graph labeling} is an important and active area in the theory of computing.
A typical problem involves a parametrized finite family $\cF_n$ of graphs (e.g., all planar graphs with $n$ nodes) and a natural function $f$ on nodes (e.g., distance for \emph{distance labeling} or  adjacency for \emph{adjacency labeling}).
The problem is to assign labels to all nodes of every graph in $\cF_n$ so that the function $f$ can be computed solely from the labels of its arguments.
Note that the algorithm computing $f$ knows $\cF_n$ but not a particular graph the nodes belong to.
The main goal is to minimize the number of distinct labels or, equivalently, the maximum length of a label in bits. 
Additional goals include the time complexity of computing both $f$ and the labeling function.
In this paper, we focus solely on the main goal.

\setcounter{footnote}{0} 
The area of graph labeling has a rather long history, which can be traced back at least to the papers \cite{Breuer66, BrFo67}. 
The main academic interest in this area is in finding the limits of efficient representation of information.
For example, the adjacency labeling of $\cF_n$ with the minimum number of labels allows one to build the smallest ``universal'' graph, containing all graphs from $\cF_n$ as induced subgraphs. 
Similarly, the optimal distance labeling of $\cF_n$ gives the smallest ``universal'' matrix, containing the distance matrices of all graphs from $\cF_n$ as principal minors. 
The distributed nature of labeling makes it also interesting for practical applications such as distributed data structures and search engines \cite{GaPe03, AAKMR06, CKM10}, routing protocols \cite{EGP03, ThZw01} and communication schemes \cite{Winkler83}. 
 
The term \emph{distance labeling} was coined by Peleg in 1999 \cite{Peleg99}, though some of the results are much older \cite{GrPo72, Winkler83}. 
Let us briefly recall some remarkable achievements.
For the family of all undirected graphs with $n$ nodes it is known that the labels of length at least $\frac{n}{2}$ bits are necessary \cite{Moon65, KNR92}. 
The first labeling scheme with $O(n)$-bit labels was obtained by Graham and Pollak \cite{GrPo72}. The state-of-the-art labeling by Alstrup et al. \cite{AAKMR06} uses labels of length $\frac{\log3}{2}n+o(n)$ bits\footnote{Throughout the paper, $\log$ stands for the binary logarithm.} and allows one to compute the distances in $O(1)$ time (assuming the word-RAM model).

For planar graphs with $n$ nodes, the lower bound of $\Omega(\sqrt[3]{n})$ bits per label and a scheme using $O(\sqrt{n}\log n)$ bits per label were presented in \cite{GPPR04}. Recently, Gawrychowski and Uznanski \cite{GaUz23} managed to shave the log factor from the upper bound. Some reasons why the gap between the lower and upper bounds is hard to close are discussed in \cite{AGMW18}.

For trees with $n$ nodes, Peleg \cite{Peleg99} presented  a scheme with $\Theta(\log^2 n)$-bits labels.  Gavoille et al. \cite{GPPR04} proved that $\frac{1}{8}\log^2 n$ bits are required and $\approx1.7\log^2 n$ bits suffice. Alstrup et al. \cite{AGHPo16} improved these bounds to $\frac{1}{4}\log^2 n$ and $\frac{1}{2}\log^2 n$ bits respectively. Finally, Freedman et al. \cite{FGNW17} reached the upper bound $(\frac{1}{4}+o(1))\log^2 n$, finalizing the asymptotics up to lower order terms.

Further, some important graph families require only polynomially many labels and thus $O(\log n)$ bits per label. Examples of such families include interval graphs \cite{GaPa08},  permutation graphs \cite{BaGa09}, caterpillars and weighted paths \cite{AGHPo16}.

Cycles are among the simplest graphs, so it may look surprising that graph labeling problems for the family $\cC_n$ of all cycles up to length $n$ are not settled yet. 
A recent result \cite{AAHKS20} states that $n+\sqrt[3]{n}$ labels are necessary and $n+\sqrt{n}$ labels are sufficient for the \emph{adjacency} labeling of $\cC_n$. Still, there is a gap between the lower and the upper bounds. (As in the case of planar graphs, this is the gap between $\sqrt[3]{n}$ and $\sqrt{n}$, though at a different level.)

To the best of our knowledge, no papers were published yet on the \emph{distance} labeling of the family $\cC_n$. 
However, there are some folklore results\footnote{Communicated to us by E. Porat.}, namely, a lower bound of $\Omega(n^{4/3})$ labels and a labeling scheme requiring $O(n^{3/2})$ labels; together they produce another gap between $\sqrt[3]{n}$ and $\sqrt{n}$.

In this paper we argue that the upper estimate is correct. We describe a distance labeling scheme for $\cC_n$ that uses almost twice less labels than the folklore scheme. 
While this is a rather small improvement, we conjecture that our scheme produces labelings that are optimal up to an additive $O(n)$ term. 
To support this conjecture, we find the optimal number of labels for the families $\cC_n$ up to $n=17$. 
Then we compare the results of our scheme with an extrapolation of the optimal results, demonstrating that the difference is a linear term with a small constant.
Finally, we describe several improvements to our scheme that further reduce this constant.


\section{Preliminaries}

Given two nodes $u,v$ in a graph, the \emph{distance} $d(u,v)$ is the length of the shortest $(u,v)$-path.
Suppose we are given a finite family $\cF=\{(V_1,E_1),\ldots,(V_t,E_t)\}$ of graphs and an arbitrary set $\cL$, the elements of which are called \emph{labels}.
A \emph{distance labeling} of $\cF$ is a function $\phi:V_1\cup\cdots\cup V_t\to \cL$ such that there exists a function $d': \cL^2\to\mathbb{Z}$ satisfying, for every $i$ and each $u,v\in V_i$, the equality $d'(\phi(u),\phi(v))=d(u,v)$.
Since $d(u,u)=0$, no label can appear in the same cycle twice.
Thus for every single graph in $\cF$ we view labels as unique names for nodes and identify each node with its label when speaking about paths, distances, etc.

In the rest of the paper, \emph{labeling} always means distance labeling.
A \emph{labeling scheme} (or just \emph{scheme}) for $\cF$ is an algorithm that assigns labels to all nodes of all graphs in $\cF$. 
A scheme is \emph{valid} if it outputs a distance labeling.

We write $C_n$ for the undirected cycle on $n$ nodes and let $\cC_n=\{C_3,\ldots, C_n\}$.
The family $\cC_n$ is the main object of study in this paper. We denote the minimum number of labels in a labeling of $\cC_n$ by $\lambda(n)$. 

\subsection{Warm-up: Labeling Directed Cycles}

Consider a distance labeling of the family $\cC_n^\car=\{C_3^\car,\ldots,C_n^\car\}$ of \emph{directed} cycles. 
Here, the distance between two vertices is the length of the \emph{unique} directed path between them. 
We write $\lambda_D(n)$ for the minimum number of labels needed to label $\cC_n^\car$. 
A bit surprisingly, the exact formula for $\lambda_D(n)$ can be easily found.

\begin{proposition}
    \label{p:directed}
    $\lambda_D(n)=\frac{n^2 +2n+ n\bmod 2}{4}$.
\end{proposition}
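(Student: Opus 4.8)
The plan is to translate the question into a purely combinatorial statement about the \emph{footprints} of the cycles, prove a counting lower bound, and match it with an explicit construction. The key structural observation is that in $C_k^\car$ the directed $u$-to-$v$ path and the directed $v$-to-$u$ path are edge-disjoint and together use all $k$ edges, so $d(u,v)+d(v,u)=k$ for $u\ne v$. Hence $d'(\phi(u),\phi(v))+d'(\phi(v),\phi(u))=k$, i.e. from the labels of two nodes sharing a cycle one recovers the length of that cycle; in particular no two labels can occur together in two different cycles. Writing $S_k\subseteq\cL$ for the set of labels used on $C_k^\car$, this says $|S_k|=k$ and $|S_k\cap S_{k'}|\le 1$ for $k\ne k'$. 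Conversely, any family of sets with these two properties is the footprint family of a valid labeling: put the elements of $S_k$ on the nodes of $C_k^\car$ in any cyclic order and, for a pair of labels sharing the (then unique) cycle $C_k^\car$, let $d'$ return their position difference modulo $k$; the condition $|S_k\cap S_{k'}|\le 1$ is exactly what makes $d'$ well defined. Thus $\lambda_D(n)=\min\bigl|\bigcup_{k=3}^n S_k\bigr|$ over all such families.

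For the lower bound I would scan the cycles from the longest to the shortest. When $C_k^\car$ is reached, every already-seen label of $S_k$ lies in some longer cycle; if $\ell,\ell'\in S_k$ are two distinct such labels and both lie in $S_{k'}$ with $k'>k$, then $|S_k\cap S_{k'}|\ge 2$, a contradiction, so the (nonempty) sets of longer-cycle indices attached to the old labels of $S_k$ are pairwise disjoint subsets of $\{k+1,\dots,n\}$. Hence $S_k$ contains at most $n-k$ old labels and contributes at least $k-(n-k)=2k-n$ fresh ones, giving $\lambda_D(n)\ge\sum_{k=3}^n\max(0,2k-n)$. A short computation, splitting on the parity of $n$, identifies this sum with $\frac{n^2+2n+n\bmod 2}{4}$.

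For the upper bound I would realize the footprints through an auxiliary graph $H$ on the vertex set $\{3,\dots,n\}$ with $\deg_H(k)\le k$: each edge $\{k,k'\}\in H$ gets a label placed in both $S_k$ and $S_{k'}$, and $S_k$ is then filled up with $k-\deg_H(k)$ brand-new labels, for a total of $\sum_{k=3}^n k-e(H)$ labels. It remains to build $H$ with $e(H)=\frac{n^2-12-(n\bmod 2)}{4}$, since subtracting this from $\sum_{k=3}^n k$ gives exactly $\frac{n^2+2n+n\bmod 2}{4}$. I would take $U=\{\lfloor n/2\rfloor+1,\dots,n\}$, put the complete graph on $U$ into $H$, and add a bipartite graph joining each $k\in\{3,\dots,\lfloor n/2\rfloor\}$ to exactly $k$ vertices of $U$; after the clique each $j\in U$ still has spare budget $j-(|U|-1)$, and these spare budgets are large enough and spread out enough for the required bipartite graph to exist (Gale--Ryser, or a direct greedy placement processing the lower vertices in decreasing order of degree). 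Counting the edges of this $H$ then finishes the proof.

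The step I expect to be the real obstacle is the bookkeeping in the construction: one has to choose $H$ so that $\deg_H(k)\le k$ is essentially tight on the lower half of the indices while leaving just the right slack on the upper half, then certify that the bipartite part is realizable for \emph{every} $n$ and that its edge count, after the parity corrections in the linear terms, lands on $\frac{n^2-12-(n\bmod 2)}{4}$ exactly. The structural lemma and the decreasing-order lower bound, by contrast, are short and robust.
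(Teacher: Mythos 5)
Your proof is correct and follows essentially the same route as the paper: the identity $d(u,v)+d(v,u)=k$ forcing any two cycles to share at most one label, the lower bound $\sum_{k}\max(0,2k-n)$ (which is the paper's telescoping count $n+(n-2)+\cdots$ in different notation), and your auxiliary graph $H$ being a more formal packaging of the paper's greedy ``reuse one label from each larger cycle, never reusing a label twice'' construction. (One pedantic note, affecting the paper equally: your claimed identity $\sum_{k=3}^n\max(0,2k-n)=\frac{n^2+2n+n\bmod 2}{4}$ holds only for $n\ge 4$; at $n=3$ the sum is $3$ while the closed form gives $4$.)
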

\begin{proof}
    Let $u$ and $v$ be two labels in $C_i^\car$. 
    Then $d(u,v)+d(v,u)=i$. 
    Hence $u$ and $v$ cannot appear together is any other cycle. 
    Thus every two cycles have at most one label in common.
    Then $C_{n-1}^\car$ contains at least $n-2$ labels unused for $C_n^\car$, $C_{n-2}^\car$ contains at least $n-4$ labels unused for both $C_n^\car$ and $C_{n-1}^\car$, and so on.
    This gives the total of at least $n+(n-2)+(n-4)+\cdots+ n\bmod 2\ $ labels, which sums up exactly to the stated formula. 
    To build a labeling with this number of labels, label cycles in decreasing order; labeling $C_i^\car$, reuse one label from each of the larger cycles such that neither label is reused twice.\qed
\end{proof}

\subsection{Basic Facts on Labeling Undirected Cycles}
\label{ss:basic}

\begin{wrapfigure}[12]{R}{4.2cm} 
    \vspace*{-9mm}
    \centering
    \includegraphics[scale=0.9, trim = 41 723 455 22, clip]{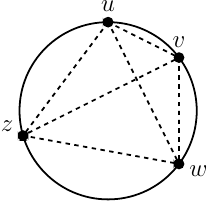}
    \caption{Triangles in a labeled cycle. The sets  $\{u,w,z\}$ and $\{v,w,z\}$ are triangles, while $\{u,v,w\}$ and $\{u,v,z\}$ are not.} 
    \label{f:triangles}
    \vspace*{-9mm}
\end{wrapfigure}
From now on, all cycles are undirected, so the distance between two nodes in a cycle is the length of the shortest of two paths between them. 
The maximal distance in $C_i$ is $\lfloor i/2\rfloor$. 
We often view a cycle as being drawn on a circumference, with equal distances between adjacent nodes, and appeal to geometric properties.

We say that a set $\{u,v,w\}$ of labels occurring in a cycle is a \emph{triangle} if each of the numbers $d(u,v),d(u,w),d(v,w)$ is strictly smaller than the sum of the other two. 
Note that three nodes are labeled by a triangle if and only if they form an acute triangle on a circumference (see Fig.~\ref{f:triangles}).
\begin{lemma} \label{l:triangle} 
    In a labeling of a family of cycles each triangle appears only once. 
\end{lemma}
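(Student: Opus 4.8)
The plan is to show that a triangle $\{u,v,w\}$ pins down the length of any cycle it can occur in: that length is forced to equal $d(u,v)+d(v,w)+d(u,w)$. Since the cycles of the family have pairwise distinct lengths (as is the case for $\cC_n$), the triangle can then live in at most one of them; and since no label repeats inside a cycle, it occupies that cycle in at most one way. Together these give ``appears only once''.

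To carry this out, I would first suppose that $\{u,v,w\}$ occurs in a cycle $C$ of length $n$ and draw $C$ on a circumference. The three nodes cut it into three arcs of edge-lengths $a,b,c\ge 1$ with $a+b+c=n$; say the $a$-arc joins $u$ and $v$ directly, the $b$-arc joins $v$ and $w$ directly, and the $c$-arc joins $w$ and $u$ directly. The distance between two of the nodes is the length of the shorter of the two arcs between them, so $d(u,v)=\min(a,b+c)$, $d(v,w)=\min(b,a+c)$, and $d(u,w)=\min(c,a+b)$. The one delicate step is to show that the triangle condition forces every arc to be a minor arc, i.e.\ $a<b+c$, $b<a+c$, and $c<a+b$: if, say, $a\ge b+c$, then $d(u,v)=b+c$ while $d(v,w)\le b$ and $d(u,w)\le c$, so $d(v,w)+d(u,w)\le b+c=d(u,v)$, contradicting the strict inequality $d(u,v)<d(v,w)+d(u,w)$ from the definition of a triangle. (This is the algebraic shadow of the geometric remark that a triple forms a triangle exactly when it spans an acute triangle inscribed in the circle, i.e.\ no arc reaches a semicircle.)

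The rest is immediate. With every arc minor, the three minima above are just $a$, $b$, $c$, hence $d(u,v)+d(v,w)+d(u,w)=a+b+c=n$. The left-hand side depends only on the labels $u,v,w$ (through the decoder $d'$ of the labeling), so the same value $n$ comes out of any cycle of the family containing this triple; with pairwise distinct cycle lengths that cycle is unique, and within it the triple occurs at most once because the labeling is injective on each cycle (no label repeats, as $d(u,u)=0$). Hence $\{u,v,w\}$ appears only once. I expect the minor-arc step to be the only real obstacle; everything after it is a single summation plus the distinctness of the cycle lengths.
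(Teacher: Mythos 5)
Your proof is correct and follows the same route as the paper: the paper's entire argument is the identity $d(u,v)+d(u,w)+d(v,w)=i$, which you derive in detail (including the minor-arc step the paper leaves implicit) and then combine with the distinctness of cycle lengths. No substantive difference in approach.
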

\begin{proof}
    If $\{u,v,w\}$ is a triangle in $C_i$, then  $d(u,v)+d(u,w)+d(v,w)=i$. \qed
\end{proof}
Lemma~\ref{l:triangle} implies the only known lower bound on the number of labels for $\cC_n$.
\begin{proposition} \label{p:lower}
    Each labeling of $\cC_n$ contains $\Omega(n^{4/3})$ distinct labels.
\end{proposition}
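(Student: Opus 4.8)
The plan is to combine Lemma~\ref{l:triangle} with a count of how many triangles a single cycle contains. First I would fix $i$ and count the triangles in $C_i$. Viewing $C_i$ as $i$ equally spaced points on a circumference, a set $\{u,v,w\}$ is a triangle exactly when the three arcs into which it cuts the circle all have length strictly less than $i/2$ (equivalently, the triple spans an acute inscribed triangle, as noted before Lemma~\ref{l:triangle}). Writing the arc lengths as $a,b,c$ with $a+b+c=i$, $a,b,c\ge 1$, and counting such compositions, one finds that the number of triangles in $C_i$ is $\Theta(i^3)$; for the lower bound it already suffices to observe that every choice of two arcs $a,b$ from a suitable constant-width range around $i/4$ (so that the third arc lands strictly below $i/2$) yields a triangle, giving $\Omega(i^3)$ of them. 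I expect the only mildly delicate point here is handling the strict inequality $<i/2$ and the parity of $i$ carefully, but this is routine.

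Next I would sum over the family. Each $C_i$ with $i\ge n/2$ contributes $\Omega(n^3)$ triangles, so the total number of triangles occurring across $C_3,\dots,C_n$ is $\Omega(n^4)$. By Lemma~\ref{l:triangle}, no triple of labels can be a triangle in two different cycles of the family, so these $\Omega(n^4)$ triangles are $\Omega(n^4)$ pairwise distinct $3$-element subsets of the set of labels used in the labeling.

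Finally, if the labeling uses $m$ labels, then the number of $3$-element subsets of labels is $\binom{m}{3}\le m^3/6$, whence $m^3/6\ge\Omega(n^4)$ and thus $m=\Omega(n^{4/3})$, as claimed.

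The main obstacle is essentially the first step — pinning down the $\Omega(i^3)$ (indeed $\Theta(i^3)$) count of triangles in $C_i$ — and even that is elementary combinatorics of compositions. The actual content of the proposition is the observation that, although a given triple of labels may co-occur in many cycles at once, Lemma~\ref{l:triangle} forces it to be a triangle in at most one of them, so the triangles of the whole family inject into the $3$-subsets of the global label set.
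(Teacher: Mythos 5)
Your proposal is correct and follows essentially the same route as the paper: both count $\Omega(i^3)$ triangles in each sufficiently long cycle $C_i$, invoke Lemma~\ref{l:triangle} to conclude that the resulting $\Omega(n^4)$ triangles are pairwise distinct $3$-subsets of the global label set, and finish via $\binom{m}{3}=O(m^3)$. The only differences are cosmetic --- the paper justifies the per-cycle count by the classical fact that three random points on a circle form an acute triangle with probability $1/4$, whereas you count compositions directly --- plus one phrase to fix: the ranges around $i/4$ from which you draw the arc lengths $a,b$ must have width $\Theta(i)$ (a constant \emph{fraction} of $i$), not constant width, in order to produce $\Omega(i^2)$ admissible compositions and hence, after anchoring at the $i$ possible starting nodes, $\Omega(i^3)$ triangles.
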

\begin{proof}
    As $n$ tends to infinity, the probability that a random triple of labels of $C_n$ forms a triangle approaches the probability that three random points on a circumference generate an acute triangle. 
    The latter probability is $1/4$: this is a textbook exercise in geometric probability. 
    Thus, the set of labels of $C_n$ contains $\Omega(n^3)$ triangles. 
    By Lemma~\ref{l:triangle}, the whole set of labels contains $\Omega(n^4)$ distinct triangles; they contain $\Omega(n^{4/3})$ unique labels. \qed
\end{proof}

By \emph{diameter} of a cycle $C_i$ we mean not only the maximum length of a path between its nodes (i.e., $\lfloor i/2\rfloor$) but also any path of this length. From the context it is always clear whether we speak of a path or of a number.

\begin{lemma}\label{l:diam} 
    For any labeling of two distinct cycles $C_i$ and $C_j$ there exist a diameter of $C_i$ and a diameter of $C_j$ such that every label appearing in both cycles belongs to both these diameters.
\end{lemma}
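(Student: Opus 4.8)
The plan is to examine the set $S$ of labels that occur in both $C_i$ and $C_j$. Since a label names a node uniquely within each cycle, every pairwise distance inside $S$ takes the same value in $C_i$ and in $C_j$; hence $D=\max_{u,v\in S}d(u,v)$ is well defined, it is attained by some pair $s,t\in S$, and --- crucially --- by Lemma~\ref{l:triangle} the set $S$ contains no triangle, as such a triangle would occur in two distinct cycles. If $|S|\le 2$ there is nothing to prove (two nodes always lie on a common diameter of their cycle), so I assume $|S|\ge 3$. The statement is symmetric in the two cycles, so I may assume $i<j$ and produce a diameter of $C_i$ through $S$ and a diameter of $C_j$ through $S$ independently.

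The main observation is: \emph{if $S$ occurs in a cycle $C_m$ and $D<m/2$, then all of $S$ lies on the shorter of the two arcs joining $s$ and $t$}; this arc has length $D$, so it extends to a diameter of $C_m$. Indeed, if some $u\in S$ were on the longer arc, then in the cyclic order $s,t,u$ the $s$--$t$ arc would equal $D<m/2$, while $d(s,u)\le D$ and $d(t,u)\le D$, combined with $D<m/2$ and the positivity of arc lengths, would force the other two arcs below $m/2$ as well; so $\{s,t,u\}$ would be a triangle --- impossible. Since $D\le\lfloor i/2\rfloor<j/2$, this applies verbatim to $C_j$ and gives a diameter of $C_j$ through $S$; applied to $C_i$ it disposes of the subcase $D<i/2$.

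The remaining case $D\ge i/2$ is where I expect the real work to lie: since $D\le\lfloor i/2\rfloor$, it forces $i$ even and $D=i/2$, so $s$ and $t$ are antipodal in $C_i$ and the observation above is empty there. The plan is to transport to $C_i$ the picture already obtained for $C_j$. By the previous step $S$ lies on an arc $A$ of $C_j$ of length exactly $i/2$ whose endpoints are $s$ and $t$; since $A$ has length $i/2\le\lfloor j/2\rfloor$, for each $w\in S$ the distance $d(s,w)$ equals the position $x_w\in\{0,\dots,i/2\}$ of $w$ along $A$, and in fact $d(w,w')=|x_w-x_{w'}|$ for all $w,w'\in S$ --- in both cycles. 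Now realize $C_i$ with $s$ at $0$ and $t$ at $i/2$: from $d(s,w)=x_w$ the node $w$ must sit at position $x_w$ or at position $i-x_w$, i.e.\ on one of the two halves cut off by $s$ and $t$. A short computation shows that two labels $u,v\in S\setminus\{s,t\}$ on opposite halves would have distance $\min(x_u+x_v,\,i-x_u-x_v)$ in $C_i$, and this can equal $|x_u-x_v|$ only if $x_u$ or $x_v$ lies in $\{0,i/2\}$ --- impossible for points other than $s,t$. Hence all of $S$ lies on a single half of $C_i$, which is an arc of length $i/2=\lfloor i/2\rfloor$, i.e.\ a diameter, and the lemma follows. (An alternative for this last case is to show that a set failing to fit on a diameter of $C_i$ must be two antipodal pairs $\{a,a+i/2\},\{b,b+i/2\}$ and that such a four-point configuration is realized by no other cycle; the transfer argument above seems a touch shorter.)
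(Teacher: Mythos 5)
Your proof is correct and follows essentially the same route as the paper: pick the maximum-distance pair $s,t$ in the common label set, use triangle-freeness (Lemma~\ref{l:triangle}) to force all common labels onto a shortest $s$--$t$ path, and then separately resolve the cycle in which $s$ and $t$ may be antipodal. The only cosmetic difference is that you settle the antipodal case by an explicit coordinate computation on the two halves, where the paper uses the betweenness identity $d(u,w)+d(w,z)+d(z,v)=d(u,v)$ inherited from the other cycle; both arguments are sound.
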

\begin{proof}
    Let $i>j$ and let $\cL$ be the set of common labels of $C_i$ and $C_j$.
    We assume $\#\cL\ge3$ as otherwise the diameters trivially exist.
    By Lemma~\ref{l:triangle}, $\cL$ contains no triangles.
    Hence for every triple of elements of $\cL$ the maximum distance is the sum of the two other distances.
    Let $u,v$ be labels with the maximum distance in $\cL$.
    We have $d(u,v)\le \lceil i/2\rceil-1$, because there are no larger distances in $C_j$.
    Then the shortest $(u,v)$-path in $C_i$ is unique.
    Since $d(u,v)=d(u,w)+d(w,v)$ for any $w\in\cL$ by the maximality of $d(u,v)$, all labels from $\cL$ appear on this unique path, and hence on any diameter containing this path.

    Though $C_j$ may contain two shortest $(u,v)$-paths, all labels from $\cL$ appear on one of them. 
    Indeed, let $w,z\in\cL\setminus \{u,v\}$. Considering the shortest $(u,v)$-path in $C_i$, we have w.l.o.g. $d(u,w)+d(w,z)+d(z,v)=d(u,v)$. 
    This equality would be violated if $w$ and $z$ belong to different shortest $(u,v)$-paths in $C_j$. 
    Thus, $C_i$ has a diameter, containing the shortest $(u,v)$-path with all labels from $\cL$ on it. \qed
\end{proof}

\begin{wrapfigure}[12]{R}{5.3cm} 
    \vspace*{-7mm}
    \centering
    \includegraphics[scale=0.9, trim = 14 722 423 25, clip]{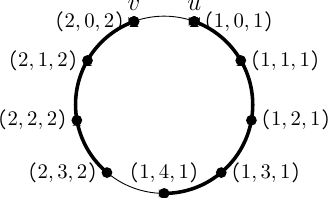}
    \caption{Folklore scheme: labeling $C_9$ in the family  $\cC_{10}$. One has $\lceil\sqrt{10}\rceil=4$, $m_1=9\bmod 4=1$, $m_2=\lfloor 9/4\rfloor=2$.}
    \label{f:folkscheme}
    \vspace*{-14mm}
\end{wrapfigure}
The known upper bound for cycle labeling is based on the following \emph{folklore labeling scheme}. 
For each $C_i\in\cC_n$ we choose arbitrary adjacent nodes $u$ and $v$ and cover $C_i$ by two disjoint paths: the path $P_1$ contains $\lceil i/2\rceil$ nodes, including $u$, while $P_2$ contains $\lfloor i/2 \rfloor$ nodes, including $v$ (see the example in Fig.~\ref{f:folkscheme}). 
Each node from $P_1$ gets the label $(1, d_1, m_1)$, where $d_1$ is the distance to $u$ and $m_1=i\bmod \big\lceil\sqrt{n}\big\rceil$; each node from $P_2$ gets the label $(2, d_2, m_2)$, where $d_2$ is the distance to $v$, $m_2=\big\lfloor i/\big\lceil\sqrt{n}\big\rceil\big\rfloor$.

\begin{proposition}[folklore] \label{p:folk_lab}
    The folklore scheme is valid and uses $\frac{3}{4}n\sqrt{n}+O(n)$ labels to label $\cC_n$.
\end{proposition}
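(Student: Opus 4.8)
The plan is to establish validity by a small case analysis of the decoding function and then to obtain the bound on the number of labels by a direct summation, grouping labels by their first two coordinates. Throughout, write $q=\lceil\sqrt n\rceil$.

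For validity, the first observation is that each covering path $P_1,P_2$ is a shortest path in $C_i$ between its own endpoints: a set of $\lceil i/2\rceil$ (resp.\ $\lfloor i/2\rfloor$) consecutive nodes on $C_i$ spans a shortest path, so on such a path the distance between two nodes equals the absolute difference of their distances to the distinguished endpoint $u$ (resp.\ $v$). Hence, if the two given labels are $(1,d,m)$ and $(1,d',m)$, the decoder outputs $|d-d'|$, and symmetrically for two labels of type $2$; this is correct whenever the two labels share a cycle, since inside a single cycle $C_i$ all type-$1$ labels lie on $P_1$ and carry the same value $m=i\bmod q$ (and likewise for type $2$). For a mixed pair $(1,d_1,m_1)$, $(2,d_2,m_2)$ the key point is that if these labels co-occur in a cycle $C_i$ then necessarily $m_1=i\bmod q$ and $m_2=\lfloor i/q\rfloor$, hence $i=m_2q+m_1$ is recovered from the labels alone; since $u$ and $v$ are adjacent and separate the two nodes on one side of the circle, the two arcs between them have lengths $d_1+d_2+1$ and $i-(d_1+d_2+1)$, so the decoder outputs $\min(d_1+d_2+1,\ i-d_1-d_2-1)$. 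In every branch the answer depends only on the two labels, so the required function $d'$ is well defined; the degenerate subcases (coinciding labels, smallest cycles) are immediate.

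For the label count I would treat the two types separately, which is legitimate since type-$1$ and type-$2$ labels are disjoint, as are labels with distinct modular coordinates. A type-$1$ label $(1,d_1,m_1)$ is used precisely by the cycles $C_i$ with $3\le i\le n$, $i\equiv m_1\pmod q$ and $d_1\le\lceil i/2\rceil-1$; hence for a fixed $m_1$ the set of type-$1$ labels has size $\lceil i^\ast/2\rceil$, where $i^\ast$ is the largest index $\le n$ congruent to $m_1$ modulo $q$, and $i^\ast=n-O(q)$. Summing over the $q$ residues gives $\tfrac12 nq+O(q^2)=\tfrac12 n\sqrt n+O(n)$ type-$1$ labels. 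A type-$2$ label $(2,d_2,m_2)$ is used precisely by the cycles with $\lfloor i/q\rfloor=m_2$ (an interval of at most $q$ values of $i$) and $d_2\le\lfloor i/2\rfloor-1$; for a fixed $m_2<\lfloor n/q\rfloor$ this set has size $\lfloor i^{\ast\ast}/2\rfloor=\tfrac{(m_2+1)q}{2}+O(1)$, where $i^{\ast\ast}=(m_2+1)q-1$, while $m_2=\lfloor n/q\rfloor$ contributes a single further term of size $O(n)$. Summing, $\sum_{m_2}\bigl(\tfrac{(m_2+1)q}{2}+O(1)\bigr)=\tfrac{q}{2}\cdot\tfrac{(n/q)^2}{2}+O(n)=\tfrac{n^2}{4q}+O(n)=\tfrac14 n\sqrt n+O(n)$. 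Adding the two contributions yields the claimed $\tfrac34 n\sqrt n+O(n)$.

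The part that requires genuine care, and which I expect to be the main (though still minor) obstacle, is the well-definedness of the mixed-pair branch of the decoder: one must check that whenever a type-$1$ and a type-$2$ label do co-occur in some cycle, that cycle is forced to be $C_{m_2q+m_1}$ and lies in the range $[3,n]$, and that $d_1,d_2$ are then automatically small enough that $d_1+d_2+1$ and $i-d_1-d_2-1$ are exactly the two arc lengths (so their minimum is the distance). Everything else — the case split for same-type pairs and the floor/ceiling bookkeeping with the $O(n)$ error terms in the count — is routine.
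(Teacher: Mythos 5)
Your proposal is correct and follows essentially the same route as the paper: the validity argument (same-type pairs give $|d-d'|$ along a shortest path; mixed pairs recover $i=m_2\lceil\sqrt n\rceil+m_1$ and compare the two arc lengths $d_1+d_2+1$ and $i-d_1-d_2-1$) is the paper's argument verbatim, and your direct count of used labels ($\tfrac12 n\sqrt n$ of type 1 plus $\tfrac14 n\sqrt n$ of type 2) is just the complementary form of the paper's computation, which counts all $n\sqrt n+O(n)$ triples and subtracts $\tfrac14 n\sqrt n+O(n)$ unused ones. The "obstacle" you flag is not one: within a fixed $C_i$ every type-1 label carries $m_1=i\bmod\lceil\sqrt n\rceil$ and every type-2 label carries $m_2=\lfloor i/\lceil\sqrt n\rceil\rfloor$ by construction, so co-occurrence forces $i$ exactly as you describe.
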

\begin{proof}
    We prove the validity describing a procedure that derives the distance between two nodes from their labels (for illustration, see Fig.~\ref{f:folkscheme}).
    Suppose that the labels $(b,d,m)$ and $(b',d',m')$ appear together in some (unknown) cycle. 
    If $b=b'$, the two labels belong to the same path, so the distance between them is $|d-d'|$. 
    If $b\ne b'$, we compute the length $i$ of the cycle from the pair $(m, m')$. 
    The two analysed labels are connected by a path of length $d+d'+1$ and by another path of length $i-d-d'-1$; comparing these lengths, we get the distance.

    To compute the number of triples $(b,d,m)$ used for labeling, note that there are two options for $b$, $\lfloor n/2\rfloor+1$ options for $d$, and $\big\lceil\sqrt{n}\big\rceil$ options for $m$, for the total of $n\sqrt{n}+O(n)$ options. However, some triples are never used as labels. The label $(b,d,m)$ is unused iff for every number $i\le n$ compatible with $m$, the length of the path $P_b$ in $C_i$ is less than $d$.  If $b=1$, the maximum $i$ compatible with $m$ is $i= (\lceil\sqrt{n}\rceil-1)\lceil\sqrt{n}\rceil+m$, which is $O(\sqrt{n})$ away from $n$. Therefore, each of $O(\sqrt{n})$ values of $m$ gives $O(\sqrt{n})$ unused labels, for the total of $O(n)$. Let $b=2$. The maximum $i$ compatible with $m$ is $i= (m+1)\lceil\sqrt{n}\rceil-1$. The number of impossible values of $d$ for this $m$ is $(\lfloor n/2\rfloor-1)-(\lfloor i/2\rfloor-1)=\frac{\sqrt{n}-m}{2}\cdot\sqrt{n}+O(\sqrt{n})$. Summing up these numbers for $m=0,\ldots,\lceil\sqrt{n}\rceil-1$, we get $\frac{n\sqrt{n}}{4}+O(n)$ unused labels. In total we have  $\frac{n\sqrt{n}}{4}+O(n)$ unused labels out of $n\sqrt{n}+O(n)$ possible; the difference gives us exactly the stated bound. \qed
\end{proof}

\section{More Efficient Labeling Scheme and Its Analysis}

We start with more definitions related to labeled cycles. 
An \emph{arc} is a labeled path, including the cases of one-node and empty paths. 
The labels on the arc form a string (up to reversal), and we often identify the arc with this string.
In particular, we speak about \emph{substrings} and \emph{suffixes} of arcs.
``To label a path $P$ with an arc $a$'' means to assign labels to the nodes of $P$ to turn $P$ into a copy of $a$.

By \emph{intersection} of two labeled cycles we mean the labeled subgraph induced by all their common labels. Clearly, this subgraph is a collection of arcs. Lemma~\ref{l:diam} says that the intersection is a subgraph of some diameter of each cycle. The intersections of two arcs and of a cycle and an arc are defined in the same way.

An \emph{arc labeling} of a family of cycles is a labeling with the property that the intersection of any two cycles is an arc. Arc labelings are natural and easy to work with. Note that the folklore scheme produces an arc labeling: the intersection of two cycles is either empty or coincides with the path $P_1$ or the path $P_2$ of the smaller cycle. The schemes defined below produce arc labelings as well.

\paragraph{2-arc labeling scheme.} In this auxiliary scheme, cycles are labeled sequentially. 
The basic step is ``label a cycle with two arcs''.
Informally, we partition the cycle into two paths of equal or almost equal length and label each of them with an arc (or with its substring of appropriate length). To specify details and reduce ambiguity, we define this step in the function $\labl(a_1,a_2,C_j)$ below. 

\begin{algorithm*}[!htb]
\begin{algorithmic}[1]
    \State{\textbf{function} $\labl(a_1,a_2,C_j)$}
    \If{$|a_1|+|a_2|< n$ or $\min\{|a_1|,|a_2|\}<\lceil j/2\rceil-1$}
        \State{return $\mathsf{error}$} \Comment{not enough labels for $C_j$}
    \Else
        \State{label any path in $C_j$ of length $\min\{|a_2|,\lfloor j/2\rfloor+1\}$ by a suffix of $a_2$}
        \State{label the remaining path in $C_j$ by a suffix of $a_1$}
    \EndIf
\end{algorithmic}
\end{algorithm*}

The definition says that we label with suffixes (rather than arbitrary substrings) of arcs and use the longest possible suffix of the \emph{second} arc. By default, we suppose that both suffixes can be read on the cycle in \emph{the same direction} (i.e., the last labels from $a_1$ and $a_2$ are at the distance $\approx j/2$). 

\begin{wrapfigure}[8]{R}{0.68\textwidth} 
    \vspace*{-5.5mm}
    \includegraphics[scale=0.6, trim = 32 722 173 32, clip]{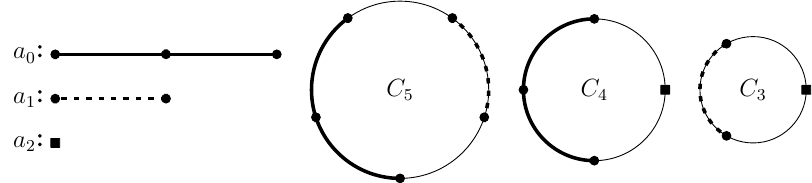}
    \caption{Example: 2-arc labeling for the family $\cC_5$.}
    \label{f:2arc}
    \vspace*{-1mm}
\end{wrapfigure}
The 2-arc scheme starts with a set $A$ of $m$ pairwise disjoint arcs of sufficient length. 

It calls the function $\labl()$ for each pair of arcs in $A$. 
As a result, it produces a family of up to $\frac{m(m+1)}{2}$ labeled cycles. 
Lemma~\ref{l:2arcvalid} below proves that the result is a labeling; we call it the \emph{2-arc labeling}.
In Fig.~\ref{f:2arc}, such a labeling of the family $\cC_5$ is shown.

\begin{lemma} \label{l:2arcvalid}
    The 2-arc labeling scheme is valid and produces arc labelings. 
\end{lemma}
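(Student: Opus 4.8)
The plan is to prove the two assertions separately, both resting on the fact that the arcs of $A$ are pairwise disjoint, so every label lies on exactly one arc $a_i\in A$ at a known position, and distinct arcs never share a label. I would first pin down what $\labl$ produces when it does not return $\mathsf{error}$: the two conditions $|a_1|+|a_2|\ge n\ge j$ and $\min\{|a_1|,|a_2|\}\ge\lceil j/2\rceil-1$ are exactly what is needed for the call to succeed, i.e. for the sub-path receiving a suffix of $a_1$ to have at most $|a_1|$ nodes and the one receiving a suffix of $a_2$ at most $|a_2|$ nodes; a short case check against the formulas $\min\{|a_2|,\lfloor j/2\rfloor+1\}$ and $j-\min\{|a_2|,\lfloor j/2\rfloor+1\}$ moreover shows that each of the two sub-paths then has at most $\lfloor j/2\rfloor+1$ nodes. (I would also note that a produced cycle never uses one arc in both roles, since two suffixes of a single arc share its last label; so the two arcs used in any produced cycle are distinct and, being disjoint, contribute disjoint label sets.) Two consequences I will use repeatedly: \emph{(i)} in every produced cycle, the labels contributed by each of the two arcs used form a contiguous sub-path and coincide with a suffix of that arc; \emph{(ii)} if two labels of a produced cycle $C_j$ lie on one arc $a_i$, then their distance in $C_j$ equals the difference of their positions in $a_i$ — they lie together on a sub-path of at most $\lfloor j/2\rfloor$ edges, hence that sub-path is a shortest path between them — and in particular this distance does not depend on $C_j$.

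For validity I would exhibit the decoder $d'$ explicitly. From labels $\ell,\ell'$ it recovers the arcs $a_i\ni\ell$ and $a_j\ni\ell'$ together with the positions of $\ell,\ell'$ on them. If $i=j$, it outputs the difference of the two positions; by \emph{(ii)} this is the correct distance in every cycle that contains both labels. If $i\ne j$, then any cycle containing both $\ell$ and $\ell'$ must be the one produced by the unique call of $\labl$ on the unordered pair $\{a_i,a_j\}$ (the scheme calls $\labl$ once per pair); so $d'$ looks that call up, reconstructs the cyclic arrangement it produced, and, if $\ell$ and $\ell'$ both appear on it, returns their distance in that cycle — otherwise $\ell,\ell'$ never co-occur and the returned value is irrelevant. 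Since a distance labeling is only required to be correct on pairs that actually appear together in some member of the family, this proves validity.

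For the arc property, take two distinct produced cycles $C$ and $C'$, coming from calls on the pairs $\{a_i,a_j\}$ and $\{a_k,a_l\}$. By disjointness of the arcs, every label common to $C$ and $C'$ lies in $(a_i\cup a_j)\cap(a_k\cup a_l)$, which is empty unless the two pairs share an arc; and the two pairs cannot coincide, since $\labl$ is called once per pair and $C\ne C'$. So the pairs share exactly one arc, say $a_i=a_k$, and all common labels lie on $a_i$. By \emph{(i)}, the $a_i$-labels of $C$ form a suffix $S$ of $a_i$ and those of $C'$ form a suffix $S'$ of $a_i$; one of $S,S'$ is a suffix of the other, so the common labels are exactly the shorter of the two — an arc — which by \emph{(i)} again occurs as a contiguous sub-path of each of $C$ and $C'$. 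Hence the intersection of $C$ and $C'$ is a single arc, as required. The only step beyond bookkeeping is consequence \emph{(ii)}: it is the sole place where the bound $\min\{|a_1|,|a_2|\}\ge\lceil j/2\rceil-1$ is genuinely used, and it is precisely what makes the $i=j$ branch of the decoder well-defined; the rest is a matter of disjoint arcs and contiguous sub-paths.
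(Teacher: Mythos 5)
Your proof is correct and follows essentially the same route as the paper's: both arguments rest on the facts that each pair of arcs is used in at most one call to $\labl$ (so labels co-occurring in several cycles lie on a single arc) and that the portion of an arc placed in a cycle $C_j$ spans at most $\lfloor j/2\rfloor$ edges, so the distance in the cycle equals the position difference along the arc. You simply make explicit some points the paper leaves implicit (the decoder, the suffix-of-a-suffix argument for the arc-intersection property, and the exclusion of a single arc playing both roles).
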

\begin{proof}
    The intersection of two cycles is an arc (possibly, empty) by construction, so it suffices to prove that the output of the 2-arc scheme is a labeling.
    Thus we need to define the function $d'$ on labels.
    This is possible iff for every two labels $u,v$ the distance $d(u,v)$ is the same for each cycle containing both $u$ and $v$. 
    Since the scheme uses each pair of arcs once, the labels $u,v$ sharing several cycles belong to some arc $a\in A$.
    The intersection of a cycle $C_i$ with $a$ is at most the diameter of $C_i$.
    Hence $d(u,v)$ in $C_i$ is the same as $d(u,v)$ in $a$, and this property holds for any cycle shared by $u$ and $v$. Thus the scheme is valid. \qed
\end{proof}


\begin{remark} \label{r:2arcnum}
    For a 2-arc labeling of the family $\cC_n$ one can take $\sqrt{2n}+O(1)$ arcs of length $\frac{n}{2}+O(1)$ each, to the total of $\frac{n\sqrt{n}}{\sqrt{2}}+O(n)$ labels. 
    So the 2-arc labeling beats the folklore labeling, which requires $\frac{3n\sqrt{n}}{4}+O(n)$ labels by Proposition~\ref{p:folk_lab}.
\end{remark}

Next, we develop the idea of 2-arc labeling to obtain a scheme that, as we believe, produces asymptotically optimal labelings for the families $\cC_n$.





\paragraph{Chain labeling scheme.} First we present the 2-arc labeling scheme for the family $\cC_n$ as greedy Algorithm~\ref{alg:2arc}, which labels cycles in the order of decreasing length and proceeds in \emph{phases} until all cycles are labeled. 
Each phase starts with creating a new arc with the function $\create(arc,length)$; then the function $\labl()$ is called in a cycle, each time using the new arc and one of the earlier created arcs. 
The length of the new arc is taken to barely pass the length condition for the first call to $\labl()$. 
In the preliminary phase 1 (lines 1--3), two arcs are created and the largest cycle is labeled. Other phases are iterations of the \textbf{while} cycle in lines 4--10. See Fig.~\ref{f:2arc} for the example.

\vspace*{-2mm}
\begin{algorithm*}[!htb]
\caption{: Greedy 2-arc labeling scheme for $\cC_n$} 
\label{alg:2arc}
\begin{algorithmic}[1]
    \State{$\create(a_0,\lceil n/2\rceil)$; $\create(a_1,\lfloor n/2\rfloor)$}
    \State{$\labl(a_0,a_1,C_n)$}
    \State{$i\gets 2$; $j\gets n-1$} \Comment{next arc to create; next cycle to label}
    \While{$j>2$} \Comment{start of $i$th phase}
        \State{$\create(a_i, \lceil j/2\rceil-1)$} \Comment{minimum length of an arc needed to label $C_j$}
        \State{$k\gets 0$} \Comment{next arc to use}
        \While{$k<i$ and $j>2$}
            \State{$\labl(a_k,a_i, C_j)$}
            \State{$j\gets j-1$; $k\gets k+1$}
        \EndWhile
        \State{$i\gets i+1$}
    \EndWhile
\end{algorithmic}
\end{algorithm*}
\vspace*{-2mm}

The \emph{chain scheme} is a modification of Algorithm~\ref{alg:2arc} that allows to use previously created arcs more efficiently. 
The difference is in the first parameter of the $\labl$ function (line 8). 
During phase $i$, a \emph{chain} is a path labeled by the concatenation $a_0 a_1\cdots a_{i-1}$ of the strings labeling all previously created arcs. 
Though formally the chain is an arc, the distance between labels in the chain may differ from the distance between the same labels in already labeled cycles. 
For example, the string $a_0a_1$ labels both the cycle $C_n$ with the diameter $\lfloor n/2\rfloor+1$ and a path of diameter $n-1$.
However, with some precautions the chain can be used for labeling cycles. 
The chain scheme is presented below as Algorithm~\ref{alg:chain}.
The auxiliary function $\trim(c)$ deletes the suffix of the chain $c$ that was used to label a cycle on the current iteration of the internal cycle.

\vspace*{-2mm}
\begin{algorithm*}[!htb]
\caption{: Chain labeling scheme for $\cC_n$} 
\label{alg:chain}
\begin{algorithmic}[1]
    \State{$\create(a_0,\lceil n/2\rceil)$; $\create(a_1,\lfloor n/2\rfloor)$}
    \State{$\labl(a_0,a_1,C_n)$}
    \State{$i\gets 2$; $j\gets n-1$} \Comment{next arc to create; next cycle to label}
    \While{$j>2$} \Comment{start of phase $i$}
        \State{$c\gets a_0a_1\cdots a_{i-1}$} \Comment{chain for phase $i$}
        \State{$\create(a_i, \lceil j/2\rceil-1)$} \Comment{minimum length of an arc needed to label $C_j$}
        \While{$|c|\ge \lceil j/2\rceil-1$ and $j>2$}
            \State{$\labl(c,a_i, C_j)$}
            \State{$j\gets j-1$; $\trim(c)$} \Comment{deleting the just used suffix from the chain}
        \EndWhile
        \State{$i\gets i+1$}
    \EndWhile
\end{algorithmic}
\end{algorithm*}
\vspace*{-2mm}

To prove validity of the chain scheme, we need an auxiliary lemma.

\begin{lemma} \label{l:chainarcs}
    Let $j$ be the length of the longest unlabeled cycle at the beginning of $i$'th phase of Algorithm~\ref{alg:chain}, $i\ge 2$. Then every substring of length $\le \lfloor j/2\rfloor +1$ of the chain $c=a_0a_1\cdots a_{i-1}$ labels an arc in a cycle $C_{j'}$ for some $j'>j$. 
\end{lemma}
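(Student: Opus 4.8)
The plan is to analyze one phase of Algorithm~\ref{alg:chain} and track where each piece of the chain $c=a_0a_1\cdots a_{i-1}$ came from. First I would set up the inductive framework: at the start of phase $i$, every arc $a_0,\ldots,a_{i-1}$ has already been used in the role of the ``first argument'' of $\labl$ in some previously labeled cycle, except possibly for a suffix of $a_{i-1}$ that was trimmed away (or never used) during phase $i-1$. So I would first prove, as a sub-claim, that at the start of phase $i$ the chain $c$ decomposes into consecutive blocks, and each block is a substring (in fact a suffix, modulo the trimming bookkeeping) of some arc $a_k$ that was used in the first-argument role to label some cycle $C_{j'}$ with $j'>j$. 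The key point driving $j'>j$ is that Algorithm~\ref{alg:chain} labels cycles in strictly decreasing order of length, so anything already labeled before phase $i$ reaches cycle $C_j$ has length $>j$.

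Second, I would argue that a substring $s$ of $c$ of length $\le \lfloor j/2\rfloor+1$ that lies entirely within a single block $a_k$ is immediately handled: since $a_k$ labeled a path of length $\min\{|a_2|,\lfloor j'/2\rfloor+1\}$ inside $C_{j'}$ and $j'>j$, we have $\lfloor j/2\rfloor+1\le\lfloor j'/2\rfloor+1$, so the whole diameter-sized path inside $C_{j'}$ on which $a_k$ was read contains $s$ as a sub-path; hence $s$ labels an arc in $C_{j'}$. The remaining case is a substring $s$ straddling a block boundary, say between $a_{k-1}$ and $a_k$. Here I would invoke the structure of $\labl$: when $C_n$ was first labeled in line~2, the two halves read $a_0$ and $a_1$ share their endpoints (they partition the cycle into two paths meeting at two vertices), so the concatenation $a_0a_1$ literally appears as a contiguous sub-path of the diameter-plus-one path of $C_n$ when read in the same direction; more generally each successive arc $a_k$ was created (line~6) with its length tuned so that, when $\labl$ glued it onto the previously labeled material, the junction happened inside some earlier cycle. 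This is the heart of why the \emph{chain} — a concatenation of arcs — still embeds in honest cycles. I would make this precise by showing that consecutive arcs $a_{k-1},a_k$ overlap in a shared vertex inside the cycle $C_{j''}$ that was being labeled when $a_k$ was introduced, so the straddling substring $s$, having length $\le\lfloor j/2\rfloor+1\le\lfloor j''/2\rfloor+1$, fits on a diameter of $C_{j''}$, and $j''>j$.

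The main obstacle I expect is exactly this boundary/junction analysis: getting the bookkeeping right for how much of each arc is consumed, how the $\trim$ operation shifts the decomposition from phase to phase, and verifying that the junctions between consecutive arcs always fall strictly inside a cycle of length exceeding $j$ rather than, say, exactly $j$ or at a place where the two arcs were read in opposite directions around a cycle (the default-direction convention after the $\labl$ definition is what rules this out, so I would lean on it carefully). A secondary subtlety is the base of the induction: for $i=2$ the chain is $a_0a_1$, which labels $C_n$ with diameter $\lfloor n/2\rfloor+1$, and since $j\le n-1$ we have $\lfloor j/2\rfloor+1\le\lfloor n/2\rfloor+1$, so every short enough substring of $a_0a_1$ indeed sits on a diameter of $C_n$; this seeds the induction cleanly. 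Once the block decomposition and the junction-containment are established, the lemma follows by a short case split (substring inside a block vs.\ straddling a boundary), with the length bound $\le\lfloor j/2\rfloor+1$ being exactly what is needed in both cases because diameters only get longer as cycles get longer.
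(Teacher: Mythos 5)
Your proposal rests on the same two facts as the paper's proof: (i) each arc $a_k$ is realized in full on a short path of the first cycle it ever labels, and (ii) the junction between $a_{k-1}$ and $a_k$ in the chain is realized as an adjacency inside the first cycle of the phase that created $a_k$ (with the base case $a_0a_1$ wrapping around $C_n$). So the mathematical content matches; the difference is purely organizational. The paper covers the chain $a_0\cdots a_{i-1}$ by \emph{two overlapping pieces} -- the prefix $a_0\cdots a_{i-2}$ (handled by the induction hypothesis) and the suffix $a'a_{i-1}$, where $a'$ is the diameter of the first cycle of phase $i-1$ -- and observes that the overlap $a'$ has length $\lfloor \hat{j}/2\rfloor+1\ge\lfloor j/2\rfloor+1$, so every substring of the stated length falls entirely into one piece. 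Your version instead decomposes the chain into blocks and treats each block and each junction separately. That works, but it forces a case split (``inside a block'' vs.\ ``straddling a boundary'') whose exhaustiveness you assert without proof: you must also rule out a substring of length $\le\lfloor j/2\rfloor+1$ meeting \emph{three} consecutive arcs. This does hold -- line 6 of Algorithm~\ref{alg:chain} gives $|a_k|=\lceil j_k/2\rceil-1\ge\lfloor j/2\rfloor$ for every arc created before phase $i$, so swallowing a whole middle block plus a symbol on each side would need length $\ge\lfloor j/2\rfloor+2$ -- but it is exactly the step your write-up omits, and it is the step the paper's overlapping-cover induction makes unnecessary.

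Two smaller inaccuracies in your setup are worth fixing but do not damage the argument. First, the chain is rebuilt from scratch at the start of every phase ($c\gets a_0a_1\cdots a_{i-1}$), so its blocks are the \emph{whole} arcs, not trimmed suffixes; $\trim$ only eats the chain within a phase. Second, each new arc $a_k$ enters as the \emph{second} argument of $\labl$ and is consumed in its entirety by the first cycle of phase $k$ (its length $\lceil j_k/2\rceil-1$ is below the cap $\lfloor j_k/2\rfloor+1$); nothing of it is ``trimmed away.'' This full consumption, together with the same-direction reading convention, is precisely what places the last label of $a_{k-1}$ next to the first label of $a_k$ in that cycle, which is the junction fact your argument needs.
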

\begin{proof}
    Note that if a string labels an arc in an already labeled cycle, then every its substring does the same. 
    We proceed by induction on $i$. 
    From line 2 we see that each substring of length $\lfloor n/2\rfloor+1$ of the string $a_0a_1$ labels a diameter in $C_n$. Hence we have the base case $i=2$. 
    Since $j$ decreases with each phase, the inductive hypothesis implies that at the start of $(i{-}1)$th phase each substring of length $\lfloor j/2\rfloor +1$ of $a_0a_1\cdots a_{i-2}$ labels an arc in some already labeled cycle. 
    Let $C_{\hat{j}}$ be the first cycle labeled at this phase. 
    Then a diameter of $C_{\hat{j}}$ is labeled with a suffix of $a_0a_1\cdots a_{i-2}$, say, $a'$, and the remaining arc is labeled with the whole string $a_{i-1}$.
    Since $\hat{j}>j$, both the prefix $a_0a_1\cdots a_{i-2}$ and the suffix $a'a_{i-1}$ of the chain $c$ have the desired property: each substring of length $\le \lfloor j/2\rfloor +1$ labels an arc in an already labeled cycle.
    As these prefix and suffix of $c$ intersect by a substring $a'$ of length $\ge \lfloor j/2\rfloor +1$, the whole chain $c$ has this property. 
    This proves the step case and the lemma. \qed 
\end{proof}

\begin{lemma}\label{l:chainvalid}
    The chain labeling scheme is valid and builds an arc labeling. 
\end{lemma}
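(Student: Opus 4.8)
The strategy mirrors the proof of Lemma~\ref{l:2arcvalid}. Since the function $\labl$ always assigns labels to a path by covering $C_j$ with two arcs, the intersection of any two cycles produced by the chain scheme is by construction a collection of arcs — in fact, I will argue it is a single arc, so the scheme produces an arc labeling. The real content is validity: I must exhibit a well-defined function $d'$ on labels, i.e., show that whenever two labels $u,v$ occur together in several cycles, the distance $d(u,v)$ is the same in all of them. The key structural difference from the 2-arc scheme is that labels now come from the \emph{chain} $c = a_0a_1\cdots a_{i-1}$, and consecutive arcs $a_k a_{k+1}$ may have been glued together along a shared suffix/prefix in a way that distorts distances: the distance between two labels inside $c$ (measured along $c$) need not equal their distance in an already-labeled cycle. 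This is exactly the subtlety flagged in the text with the $a_0a_1$ example. So the plan is: (1) use Lemma~\ref{l:chainarcs} to control how labels from the chain sit inside cycles; (2) conclude that every label-pair that is reused has a consistent distance.

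Here is the order of steps. First, fix two labels $u,v$ that appear together in at least two cycles. Trace where they were created. Two cases: either $u$ and $v$ were both introduced as part of the \emph{same} newly-created arc $a_i$ (in which case they are handled exactly as in Lemma~\ref{l:2arcvalid}, since $a_i$ is a fresh arc all of whose substrings of length $\le \lfloor j/2\rfloor+1$ are genuine arcs), or $u$ and $v$ end up together only because they both lie on some chain $c$ used in a call $\labl(c, a_i, C_j)$. In the latter case, apply Lemma~\ref{l:chainarcs}: the portion of the chain used to label a diameter of $C_j$ has length $\le \lfloor j/2\rfloor+1$, so by Lemma~\ref{l:chainarcs} this substring \emph{already} labels an arc in some previously labeled cycle $C_{j'}$ with $j'>j$. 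Therefore the distance between $u$ and $v$ as it appears in $C_j$ — namely their distance along that chain-substring — coincides with their distance in $C_{j'}$, which is the distance recorded earlier. Since this holds for every cycle in which the pair is reused from the chain, $d'$ is well-defined.

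Second, I need to handle the mixed sub-case where, within one call $\labl(c,a_i,C_j)$, one label comes from the chain $c$ and the other from $a_i$; and also the sub-case where the pair appears once from the chain and once from a fresh arc. For a pair occurring together in a single cycle, there is nothing to check. For a pair occurring together twice, at least one occurrence pins down the distance via a genuine arc (either $a_i$ fresh, or a chain-substring that is a genuine arc by Lemma~\ref{l:chainarcs}), and the argument above applies: all occurrences of the pair must agree with that genuine-arc distance, because a diameter of each cycle containing them equals an arc. Finally, for the arc-labeling claim: because each $\labl$ call covers $C_j$ by exactly two arcs (a suffix of $c$ and the whole $a_i$, or a substring thereof) and because by Lemma~\ref{l:diam} the common labels of any two cycles lie on a common diameter, the intersection of any two cycles is a single arc. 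The main obstacle is organizing the case analysis cleanly so that every reused label-pair is seen to be "anchored" by an occurrence inside a bona fide arc; Lemma~\ref{l:chainarcs} does the heavy lifting there, so once that lemma is invoked correctly the rest is bookkeeping analogous to Lemma~\ref{l:2arcvalid}.
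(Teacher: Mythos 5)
Your overall strategy is the paper's: anchor every reused pair of labels to a larger cycle, using Lemma~\ref{l:chainarcs} for pairs sitting on the chain and the freshness of $a_i$ for pairs inside the new arc. However, there is a genuine gap in your treatment of the mixed case. The paper's proof hinges on one concrete observation that you never establish: within a single phase the substrings of the chain consumed by $\trim$ are pairwise disjoint, and they are disjoint from the fresh arc $a_i$; consequently, if a pair $(u,v)$ of $C_j$ also occurs in some larger cycle, then $u$ and $v$ must lie on the \emph{same} one of the two arcs $c'$, $a_i'$ covering $C_j$. In other words, a mixed pair (one label from $c'$, one from $a_i'$) is always \emph{new} and requires no consistency check at all. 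Your substitute argument --- ``for a pair occurring together twice, at least one occurrence pins down the distance via a genuine arc'' --- breaks down exactly here: if a pair were mixed in the smaller of two cycles containing it, then (since $a_i$ is fresh, the larger cycle would also belong to phase $i$ and the pair would be mixed there too) \emph{neither} occurrence places both labels inside a single bona fide arc, so there is nothing to anchor to. The correct resolution is that this configuration is impossible, and that impossibility is precisely the disjointness fact you omit; without it the case analysis does not close.

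A secondary, smaller issue: for the arc-labeling claim you appeal to Lemma~\ref{l:diam}, but that lemma only says the common labels of two cycles lie on a diameter of each; it does not by itself imply they form a single contiguous arc, and in any case it presupposes that the labeling is already known to be valid. The intended justification is structural (``by construction''), not via Lemma~\ref{l:diam}. With the disjointness observation added and the mixed case dispatched as vacuous, the rest of your plan matches the paper's proof.
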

\begin{proof}
    To prove that Algorithm~\ref{alg:chain} builds a labeling, it suffices to check the following property: in every cycle $C_j$, each pair of labels either do not appear in larger cycles or appear in some larger cycle at the same distance as in $C_j$. 
    This property trivially holds for $C_n$, so let us consider a cycle $C_j$ labeled at $i$'th phase, $i\ge2$.
    The cycle $C_j$ is labeled by two arcs: a substring of the chain $c=a_0\cdots a_{i-1}$ and a suffix of the new arc $a_i$.
    We denote them by $c'$ and $a_i'$ respectively.
    
    Suppose that a pair of labels $(u,v)$ from $C_j$ appeared in a larger cycle.
    Since all substrings of $c$ used for labeling together with $a_i$ are disjoint, $u$ and $v$ belong to the same arc ($c'$ or $a_i'$).
    Then the shortest $(u,v)$-path in $C_j$ is within this arc. 
    If $u,v$ are in $a_i'$, then $d(u,v)$ in $C_j$ is the same as in the larger cycle containing $a_i$. 
    If $u,v$ are in $c'$, then $d(u,v)$ in $C_j$ is the same as in the larger cycle containing the arc $c'$; as $|c'|\le \lceil j/2\rceil-1$, such a cycle exists by Lemma~\ref{l:chainarcs}. Hence we proved that Algorithm~\ref{alg:chain} indeed builds a labeling; it is an arc labeling by construction. \qed
\end{proof}

We call the labelings obtained by chain scheme \emph{chain labelings}. Now we estimate the efficiency of the scheme.

\begin{theorem}\label{t:chain}
    A chain labeling of a family $\cC_n$ of cycles uses $\frac{n\sqrt{n}}{\sqrt{6}}+O(n)$ labels.
\end{theorem}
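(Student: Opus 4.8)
The plan is to track the total number of labels consumed by Algorithm~\ref{alg:chain} phase by phase. Let me set up the bookkeeping. In phase $i$ the chain has length $L_i = |a_0| + |a_1| + \cdots + |a_{i-1}|$, and exactly one new arc $a_i$ is created, of length $\lceil j/2\rceil - 1 \approx j/2$ where $j$ is the longest unlabeled cycle at the start of the phase. The new labels introduced in phase $i$ are precisely the $|a_i|$ labels of $a_i$, so the final count is $\sum_i |a_i| + |a_0| + |a_1| = \sum_i |a_i| + O(n)$. The task is thus to estimate $\sum_i |a_i|$, which requires understanding how fast $j$ decreases across phases, i.e. how many cycles get labeled in each phase.

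\medskip
\noindent\textbf{Key steps.} First I would quantify the progress within phase $i$. During the phase, the \textbf{while} loop peels suffixes off the chain $c$ of length $L_i$: each iteration labels one cycle $C_j$ using a suffix of $c$ of length $\lceil j/2\rceil - 1$ (the remaining arc of $C_j$ being labeled by the fresh arc $a_i$, which by construction is long enough for the first such cycle and a fortiori for the smaller ones). So the number of cycles labeled in phase $i$ is the largest $t$ such that $\sum_{s=0}^{t-1}(\lceil (j-s)/2\rceil - 1) \le L_i$; since the $s$-th term is about $(j-s)/2$, this sum telescopes to roughly $\frac{1}{4}\bigl(j^2 - (j-t)^2\bigr) = \frac{1}{4}(2j-t)t$. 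Writing $j' = j - t$ for the next phase's starting length, this says $L_i \approx \frac{1}{4}(j^2 - j'^2)$, i.e. each phase consumes an amount of chain equal to (a quarter of) the drop in $j^2$. Second, I would derive the recursion for $L_i$ itself: $L_{i+1} = L_i + |a_i| \approx L_i + j/2$, where $j = j_i$ is the phase-$i$ starting length. Combining with $L_i \approx \frac14(j_i^2 - j_{i+1}^2)$ gives a self-contained relation between consecutive $j_i$.

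\medskip
\noindent\textbf{Solving the recursion.} Set $x_i = j_i/n \in [0,1]$ and $\ell_i = L_i/n^2$ after rescaling (lengths scale like $n$, the chain like $n^2$ since $\sum |a_i|$ has $\Theta(\sqrt n)$ terms each of size $\Theta(n)$ — wait, that gives $n^{3/2}$, so in fact $L_i = \Theta(n^{3/2})$ and the right rescaling is $\ell_i = L_i/n^{3/2}$, $x_i = j_i/n$, with the phase index $i$ running up to $\Theta(\sqrt n)$; I'd introduce a continuous parameter $\tau = i/\sqrt n$ and pass to the ODE limit). The two relations become, in the limit, $\ell' = x/2$ and $\ell = -\frac14 (x^2)' \cdot (\text{rate})$; more carefully, from $L_i \approx \frac14(j_i^2 - j_{i+1}^2)$ we get that over $d i$ phases $j^2$ drops by $\approx 4 L$, i.e. $\frac{d(j^2)}{di} \approx -4L$, while $\frac{dL}{di} \approx j/2$. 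This is a linear system; differentiating, $\frac{d^2(j^2)}{di^2} \approx -4 \cdot \frac{j}{2} = -2j$, and with $y = j^2$ one gets $y'' = -2\sqrt{y}$, solvable via the energy integral $(y')^2 = C - \frac{8}{3} y^{3/2}$. The boundary conditions are $j = n$, $L = O(n)$ at $i=0$ (so $y' \approx 0$ there, fixing $C = \frac83 n^3$) and the process runs until $j$ reaches $O(1)$. Then $\sum_i |a_i| = \sum_i \frac{j_i}{2} \approx \frac12\int_0^{i_{\max}} j\, di = \frac12 \int j\, \frac{di}{dj}\, dj$, and $\frac{dj}{di} = \frac{y'}{2\sqrt y} = \frac{-\sqrt{\frac83(n^3 - y^{3/2})}}{2j}$. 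Substituting and evaluating the resulting elementary integral (a Beta-function-type integral after the substitution $j = n\sin^{2/3}\theta$ or similar) should yield $\frac{n\sqrt n}{\sqrt 6} + O(n)$; I would carry out this one definite integral explicitly and check the constant matches.

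\medskip
\noindent\textbf{Main obstacle.} The conceptual content is routine once the recursion is set up; the real work — and the place where an error is most likely — is twofold. (i) Controlling the error terms: each phase contributes $O(1)$ slack from ceilings and from the "first arc must be long enough" condition, there are $\Theta(\sqrt n)$ phases, and one must verify these accumulate to only $O(n)$, not more; similarly the passage from the discrete recursion to the ODE must be justified with a total error of $O(n)$ in the final count. (ii) Getting the constant $\frac{1}{\sqrt 6}$ exactly right out of the definite integral — it is easy to be off by a factor from $\sqrt n$ vs $n$, from the $\frac14$ in $L \approx \frac14(j^2 - j'^2)$, or from the lower limit of integration (the process stops at $j = \Theta(1)$, not $j=0$, but that only affects lower-order terms). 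So the plan is: (1) establish the exact per-phase recurrences with explicit $O(1)$ error; (2) sum them to get $\sum|a_i|$ as a Riemann sum; (3) pass to the integral, solve the ODE by the energy method, and evaluate; (4) confirm all accumulated errors are $O(n)$.
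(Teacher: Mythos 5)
Your route is genuinely different from the paper's, and it does lead to the right constant: I checked that your recursion $\frac{d(j^2)}{di}=-4L$, $\frac{dL}{di}=j/2$ with $C=\frac{8}{3}n^3$ gives $\sum_i|a_i|=\int_0^n j^2\,dj\big/\sqrt{\tfrac{8}{3}(n^3-j^3)}=\frac{1}{3}\sqrt{\tfrac{3}{8}}\cdot 2n^{3/2}=\frac{n\sqrt{n}}{\sqrt{6}}$, so the plan is sound in principle. The paper avoids the dynamics entirely and instead counts \emph{pairs} of labels: by Lemma~\ref{l:chainarcs}, the pairs inside the chain portion of $C_j$ already occurred in a larger cycle, so the only new pairs contributed by $C_j$ are the $\frac{j^2}{4}+O(j)$ pairs between its chain part and its arc part; summing gives $\frac{n^3}{12}+O(n^{5/2})$ pairs, all remaining corrections (pairs inside the $O(\sqrt n)$ arcs, and the three kinds of pairs that never meet) are each $O(n^{5/2})$, and $ch(n)=\sqrt{2p}+O(1)$ delivers the bound. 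The advantage of that accounting is exactly at the point you flag as your main obstacle: every error source enters additively at the level of pairs as $O(n^{5/2})$ against a main term of $n^3/12$, and the square root converts this to $O(n)$ for free, whereas your approach must propagate $O(1)$-per-cycle discretization errors through $\Theta(\sqrt n)$ phases of a nonlinear recursion (including near the singular endpoint $j\to 0$) and verify they accumulate to only $O(n)$ in $\sum_i |a_i|$ --- doable, but this is real work that your plan defers rather than completes. If you finish that error analysis your argument stands on its own; alternatively, note that your key structural observation (each cycle draws $\approx j/2$ fresh labels paired against $\approx j/2$ reused ones) is precisely what powers the paper's pair count, so you could switch to that bookkeeping at no conceptual cost.
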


We first need a simpler estimate.
\begin{lemma} \label{l:chainphases}
    Algorithm~\ref{alg:chain} labels $\cC_n$ using $O(\sqrt{n})$ phases and $O(n\sqrt{n})$ labels.
\end{lemma}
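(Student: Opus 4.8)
# Proof Proposal for Lemma~\ref{l:chainphases}

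The plan is to bound the number of phases and the number of labels separately, starting with the phases. At the beginning of phase $i$ (for $i \ge 2$), let $j_i$ denote the length of the longest unlabeled cycle. The chain available at phase $i$ is $c = a_0 a_1 \cdots a_{i-1}$, whose total length is the sum of the lengths of the created arcs; since $\create(a_i, \lceil j/2\rceil - 1)$ makes arc $a_i$ of length roughly $j_i/2$, and the $j$ values only decrease, the chain length grows by at least a constant fraction of $n$ during the early phases but more carefully: arc $a_i$ has length $\lceil j_i/2 \rceil - 1 \ge j_i/2 - 1$. First I would observe that within phase $i$, each call to $\labl(c, a_i, C_j)$ consumes a suffix of the chain of length $\min\{\lceil j/2\rceil, |c| - (\lceil j/2\rceil - 1)\} $ — roughly, it removes about $j/2$ labels from the chain per cycle labeled (the half that is labeled by the chain rather than by $a_i$). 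So a phase that starts with chain length $|c|$ and current cycle length $j$ labels on the order of $|c|/(j/2) = 2|c|/j$ cycles before the chain is exhausted, i.e., before $|c| < \lceil j/2 \rceil - 1$.

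The key quantitative step is to track how $|c|$ evolves from one phase to the next. At the start of phase $i$, $|c| = \sum_{k=0}^{i-1} |a_k|$. Phase $i$ removes from the chain a total length of roughly (number of cycles labeled in phase $i$) $\times$ (average half-length), which by the stopping condition is essentially all of $|c|$ down to the threshold $\lceil j/2 \rceil - 1$; but then phase $i{+}1$ adds back the new arc $a_i$ of length $\approx j_i/2$ and the phase also "re-concatenates" — wait, the chain is rebuilt as $c \gets a_0 a_1 \cdots a_{i-1}$ at the start of each phase (line 5), so the chain length at phase $i$ is the full sum $\sum_{k<i}|a_k|$, not a trimmed remnant. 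This is the crucial point: each phase starts fresh with the entire concatenation. So if I let $L_i = \sum_{k=0}^{i-1}|a_k|$, then phase $i$ labels cycles from $j_i$ down to some $j_{i+1}$, consuming total chain-length about $\sum_{j=j_{i+1}+1}^{j_i} (j/2) \approx (j_i^2 - j_{i+1}^2)/4$, and this is bounded by $L_i$ up to lower-order terms (the phase stops either when the chain runs low or when $j$ hits $2$). Meanwhile $L_{i+1} = L_i + |a_i| = L_i + j_i/2 + O(1)$. Since $L_2 = \lceil n/2\rceil + \lfloor n/2 \rfloor = n$, and each $|a_i| \le n/2$, we get $L_i \le n + (i-2)\cdot n/2 = O(i n)$.

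Now for the phase count: in phase $i$ the decrease $j_i - j_{i+1}$ satisfies $(j_i^2 - j_{i+1}^2)/4 \gtrsim L_i \ge n$ (the chain of length $\ge n$ must be exhausted or $j$ reaches $2$), hence $j_i^2 - j_{i+1}^2 \gtrsim 4n$; since $j_2 = n-1$ and the process stops when $j \le 2$, summing telescopically gives that the number of phases $T$ satisfies $n^2 \gtrsim 4nT$, so $T = O(n)$ — but this is too weak. I would sharpen it: $L_i \ge n + (i-2) \cdot (\text{something}) $; more precisely, since $j$ stays $\Theta(n)$ for the first $\Theta(\sqrt n)$ phases (as I will confirm), $|a_i| = \Theta(n)$ there, so $L_i = \Theta(i n)$, giving $j_i^2 - j_{i+1}^2 = \Omega(in)$, and summing from $i=2$: $n^2 - j_T^2 = \Omega(n \sum_{i\le T} i) = \Omega(nT^2)$, whence $T = O(\sqrt n)$. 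This bootstrap (using $j_i = \Theta(n)$ over the relevant range, then confirming it) is the main obstacle and must be done carefully — I would set it up by showing $j_i^2 \ge n^2 - c n \sum_{k \le i} k \ge n^2 - c' n i^2$, which stays $\Omega(n^2)$ for $i = O(\sqrt n)$, closing the loop. Finally, the label count: each phase $i$ creates one new arc $a_i$ of length $\le n/2$ contributing at most $n/2 + O(1)$ new labels (the chain reuses old labels, so only $a_i$ is new), so over $O(\sqrt n)$ phases the total is $O(\sqrt n) \cdot O(n) = O(n\sqrt n)$ labels, including the two initial arcs. That completes both bounds. \qed
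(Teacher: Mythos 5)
Your proof takes a genuinely different route from the paper. The paper's proof is a domination argument: it compares the run of Algorithm~2 phase by phase with the run of Algorithm~1 (the greedy 2-arc scheme), showing by induction that the chain scheme labels at least as many cycles per phase and creates arcs that are no longer, so it inherits the $\sqrt{2n}+O(1)$ phase count and $O(n\sqrt n)$ label count already known for the 2-arc scheme from Remark~1. Your proof instead analyzes the chain algorithm's dynamics from first principles, tracking the chain length $L_i=\sum_{k<i}|a_k|$ and the decrement of $j$ across phases via the budget identity ``chain consumed in phase $i$ $\approx (j_i^2-j_{i+1}^2)/4$''. The paper's argument is shorter and avoids all quantitative bookkeeping; yours is self-contained (it does not presuppose the 2-arc analysis) and yields more information, e.g.\ that $j$ stays $\Theta(n)$ throughout the first $\Theta(\sqrt n)$ phases.

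One step of your argument does not work as written and needs patching. Your telescoping ``summing from $i=2$: $n^2-j_T^2=\Omega(nT^2)$'' relies on $j_i^2-j_{i+1}^2=\Omega(in)$ holding for \emph{every} phase $i\le T$, but you only justify that bound (via $L_i=\Omega(in)$, which needs $|a_k|=\Omega(n)$) for the phases where $j$ is still $\Theta(n)$. In the late phases $j_i$ may be $o(n)$, the total chain a phase can possibly consume is only about $j_i^2/4$, and the per-phase decrement bound $\Omega(in)$ fails. The fix is easy with the tools you already have: your unconditional bound $j_i^2\ge n^2-O(ni^2)$ shows $j_i\ge n/\sqrt2$ for all $i\le\varepsilon\sqrt n$, hence $L_{\varepsilon\sqrt n}=\Omega(n^{3/2})$; after that point every non-final phase consumes at least $L_i-O(n)=\Omega(n^{3/2})$ of the total consumption budget $\sum_{j\le n}\lceil j/2\rceil\le n^2/4+n$, so only $O(\sqrt n)$ further phases are possible. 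With that repair the phase bound holds, and your label count (each phase contributes one new arc of length at most $n/2$, and every label lies in some $a_i$) is correct.
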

\begin{proof}
    Let us compare the runs of Algorithms~\ref{alg:2arc} and~\ref{alg:chain} on the family $\cC_n$. 
    Suppose that $\ell_i$ is the length of $a_i$ for Algorithm~\ref{alg:chain} and $N_i$ is the number of cycles labeled by Algorithm~\ref{alg:chain} during the first $i$ phases.
    For Algorithm~\ref{alg:2arc}, we denote the same parameters by $\ell_i'$ and $N_i'$. Note that $N_1=N_1'=1$.

    Let $i\ge2$. If $N_{i-1}=N_{i-1}'$, then $\ell_i=\ell_i'$ as both algorithms begin the $i$'th phase with the same value of $j$. 
    During this phase, Algorithm~\ref{alg:2arc} labels $i$ cycles, while Algorithm~\ref{alg:chain} labels at least $i$ cycles (the length of the chain allows this), and possibly more. 
    Hence $N_i\ge N_i'$.
    If $N_{i-1}>N_{i-1}'$, Algorithm~\ref{alg:chain} begins the $i$'th phase with smaller value of $j$ compared to Algorithm~\ref{alg:2arc}. 
    Then $\ell_i\le \ell_i'$ and again, the length of the chain allows Algorithm~\ref{alg:chain} to label at least $i$ cycles during the $i$'th phase.
    Hence $N_i>N_i'$ (or $N_i=N_i'=n-2$ if both algorithms completed the labeling during this phase).
    Therefore,  Algorithm~\ref{alg:chain} uses at most the same number of phases and at most the same number of labels as Algorithm~\ref{alg:2arc}. The latter uses $\sqrt{2n}+O(1)$ arcs and thus $O(n\sqrt{n})$ labels. The lemma is proved. \qed
\end{proof}

\begin{proof}[of Theorem~\ref{t:chain}]
    Let us count distinct \emph{pairs} of labels. 
    First we count the pairs of labels that appear together in some cycle.
    We scan the cycles in the order of decreasing length and add ``new'' pairs (those not appearing in larger cycles) to the total. 
    After applying Algorithm~\ref{alg:chain} to $\cC_n$, each cycle $C_j$ is labeled by two arcs of almost equal length: one of them is a substring of the chain $c$ and the other one is a suffix of the current arc $a_i$.
    All pairs from $c$ in $C_j$ are not new by Lemma~\ref{l:chainarcs}.
    All pairs between $c$ and $a_i$ are new by construction, and their number is $\frac{j^2}{4}+O(j)$. Over all cycles $C_j$, this gives $\frac{n^3}{12}+O(n^2)$ distinct pairs in total.
    All pairs from $a_i$ are new if and only if $C_j$ is the first cycle labeled in a phase. 
    By Lemma~\ref{l:chainphases}, Algorithm~\ref{alg:chain} spends $O(\sqrt{n})$ phases.
    Hence there are $O(\sqrt{n})$ arcs $a_i$, each containing $O(n^2)$ pairs of labels, for the total of $O(n^{5/2})$ pairs.
    Therefore, the number of pairs that appear together in a cycle is $\frac{n^3}{12}+O(n^{5/2})$. 

    Next we count the pairs of labels that \emph{do not} appear together. 
    The labels in such a pair belong to different arcs.
    Let $u,v$ be from $a_{i'}$ and $a_i$ respectively, $i'<i$.
    If $u$ and $v$ \emph{appear} together, then the largest cycle containing both $u$ and $v$ was labeled at phase $i$. 
    Indeed, earlier phases have no access to $v$, and if $u$ and $v$ share a cycle labeled at a later phase, then Lemma~\ref{l:chainarcs} guarantees that they also appear in a larger cycle.
    Thus, to get the total number of pairs that do not appear together we count, for each phase $i$, the pairs $(u,v)$ such that $u$ is from the chain, $v$ is from $a_i$, and neither of the cycles labeled  during this phase contains both $u$ and $v$.
     
    There are three reasons why neither of the cycles labeled during phase $i$ contains both $u$ from the chain $c$ and $v$ from $a_i$. 
    First, this can be the last phase, which is too short to use $u$. 
    Since $|a_i|<n$ and $|c|=O(n\sqrt{n})$ by Lemma~\ref{l:chainphases}, the last phase affects $O(n^{5/2})$ pairs.
    Second, $u$ can belong to a short prefix of $c$ that remained unused during the phase due to the condition in line~7 of Algorithm~\ref{alg:chain}. 
    This prefix is shorter than $a_i$, so this situation affects less than $|a_i|^2$ pairs.
    As the number of phases is $O(\sqrt{n})$ (Lemma~\ref{l:chainphases}), the total number of such pairs is $O(n^{5/2})$.
    Third, $v$ can belong to a prefix of $a_i$ that was unused for the cycle containing $u$. 
    The number of labels from $a_i$ that were unused for  \emph{at least one} cycle during phase $i$ does not exceed $|a_i|-|a_{i+1}|$, which gives $O(n)$ labels over all phases. 
    Each such label $v$ is responsible for $O(n\sqrt{n})$ pairs by Lemma~\ref{l:chainphases}, for the total of $O(n^{5/2})$ pairs. Thus there are $O(n^{5/2})$ pairs that do not appear together.

    Putting everything together, we obtain that a chain labeling of $\cC_n$ contains $p=\frac{n^3}{12}+O(n^{5/2})$ pairs of labels. Hence the number $ch(n)$ of labels is
\begin{equation*}
    ch(n)= \sqrt{2p}+O(1)= \frac{n\sqrt{n}}{\sqrt{6}}\cdot \sqrt{1+O(n^{-1/2})}+O(1)=\frac{n\sqrt{n}}{\sqrt{6}} +O(n),
\end{equation*}
as required. \qed
\end{proof}

\section{Chain Labelings vs Optimal Labelings}

The chain labeling beats the folklore labeling almost twice in the number of labels (Theorem~\ref{t:chain} versus Proposition~\ref{p:folk_lab}). 
However, it is not clear how good this new labeling is, given that the known lower bound (Proposition~\ref{p:lower}) looks rather weak. 
In this section we describe the results of an experimental study we conducted to justify Conjecture~\ref{con:opt}, stating that the chain labeling is asymptotically optimal.

\begin{conjecture} \label{con:opt}
    $\lambda(n)=\frac{n\sqrt{n}}{\sqrt{6}}+O(n)$.
\end{conjecture}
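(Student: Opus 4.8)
The upper bound $\lambda(n)\le\frac{n\sqrt n}{\sqrt6}+O(n)$ is already delivered by Theorem~\ref{t:chain}, so the whole task is the matching lower bound $\lambda(n)\ge\frac{n\sqrt n}{\sqrt6}-O(n)$ — that is, pushing Proposition~\ref{p:lower} from the exponent $4/3$ to $3/2$ with the sharp constant. The plan is to count \emph{pairs} of labels rather than triangles. If a labeling uses $L$ labels, the number of distinct unordered pairs $\{u,v\}$ that co-occur in some cycle is at most $\binom{L}{2}$. On the other hand the total number of pair-slots is $\sum_{j=3}^{n}\binom{j}{2}=\frac{n^3}{6}+O(n^2)$, and the number of distinct co-occurring pairs equals this quantity minus the total reuse $D:=\sum_p\bigl(m(p)-1\bigr)$, where $m(p)$ is the number of cycles of $\cC_n$ containing the pair $p$. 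Hence it is enough to prove $D\le\frac{n^3}{12}+O(n^{5/2})$ for every labeling of $\cC_n$: this yields $\binom{L}{2}\ge\frac{n^3}{12}-O(n^{5/2})$, i.e.\ the claimed bound. The target $\frac{n^3}{12}$ is essentially best possible, since for a chain labeling $D=\frac{n^3}{12}+O(n^{5/2})$.

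To bound $D$ I would charge every repeated occurrence of a pair to the \emph{shortest} cycle in which it occurs, so that $D=\sum_j d_j$, where $d_j$ is the number of pairs of $C_j$ that also occur in some longer cycle; the goal becomes $\sum_j d_j\le\frac{n^3}{12}+O(n^{5/2})$. The only structural tools are Lemma~\ref{l:diam} — the common labels of two cycles lie on a single diameter of each — and Lemma~\ref{l:triangle} — an acute triangle occurs at most once. For a single fixed longer cycle $C_i$, Lemma~\ref{l:diam} confines the pairs of $C_j$ shared with $C_i$ to one diameter of $C_j$, a path on $\lceil j/2\rceil+1$ nodes, hence to at most $\approx\frac{j^2}{8}$ pairs; if only one longer cycle mattered for $C_j$ this would already give $\sum_j d_j\le\frac{n^3}{24}+O(n^{5/2})$, with room to spare. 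The obstruction is that $C_j$ may share \emph{different} diameters with different longer cycles: two ``perpendicular'' diameters of $C_j$ cover about $\frac{3j^2}{8}$ pairs, and $\sum_j\frac{3j^2}{8}=\frac{n^3}{8}$ already exceeds the $\sum_j\frac{j^2}{4}=\frac{n^3}{12}$ we are allowed. So no purely per-cycle estimate can work, and the heart of the proof must be a global argument.

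Accordingly the plan is to establish a structural dichotomy: in any labeling the ``reuse skeleton'' — which arc of which longer cycle reappears inside which shorter cycle — is forced, up to an $O(n)$ discrepancy in the label count, into the nested chain-of-arcs pattern produced by Algorithm~\ref{alg:chain}. Concretely, fix a longest cycle and process the cycles in decreasing length; Lemma~\ref{l:diam} turns the common part of each new cycle with each earlier one into an arc on a diameter, and the key point is to use Lemma~\ref{l:triangle} to show that two earlier cycles cannot feed ``transverse'' heavy arcs into the same shorter cycle, because the overlap of two such arcs would contain an acute triangle shared by two cycles. Quantifying this — bounding how many extra labels any deviation from the chain pattern costs, and summing those costs over the $O(\sqrt n)$ phases of the labeling — is the main obstacle, and it is exactly the step for which we presently have only the computational evidence for $n\le17$. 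I expect it to require an amortized charging argument over the phases together with a fractional relaxation that captures the geometry of families of overlapping diameters; solving that relaxation should be what produces the constant $1/\sqrt6$.
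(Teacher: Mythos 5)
This statement is a \emph{conjecture}: the paper does not prove it, and offers only the upper bound of Theorem~\ref{t:chain}, exhaustive computation of $\lambda(n)$ for $n\le 17$, and numerical evaluation of $ch(n)$ and $ch^+(n)$ as supporting evidence. Your proposal therefore cannot be compared against a proof in the paper; it has to be judged on its own, and as a proof it has a genuine gap. The reduction you set up is sound and is the right frame: the upper bound follows from Theorem~\ref{t:chain}, and the lower bound is equivalent to showing that the total pair reuse $D$ in \emph{any} labeling is at most $\frac{n^3}{12}+O(n^{5/2})$, which mirrors the pair-counting the paper performs (in the easy direction) in the proof of Theorem~\ref{t:chain}. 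You also correctly diagnose why Lemma~\ref{l:diam} and Lemma~\ref{l:triangle} alone cannot finish the job: a per-cycle application of Lemma~\ref{l:diam} gives only $\approx j^2/8$ reused pairs per longer cycle, but a short cycle may share distinct diameters with several longer cycles, and the naive sum then overshoots the budget $\sum_j j^2/4=\frac{n^3}{12}$.

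The gap is the entire remaining content: the ``structural dichotomy'' asserting that the reuse skeleton of an arbitrary labeling is forced, up to $O(n)$ labels, into the chain pattern of Algorithm~\ref{alg:chain}. This is stated as a goal, not established; you acknowledge as much, and the quantification you defer (``amortized charging over phases,'' ``fractional relaxation'') is exactly where the difficulty lives --- an adversarial labeling need not have phases at all, and nothing proved so far rules out a labeling whose shared arcs are arranged in some non-nested way that achieves more reuse. As the paper notes, the best known lower bound remains $\Omega(n^{4/3})$ (Proposition~\ref{p:lower}), and closing the gap to $n^{3/2}$ is posed as the main open problem. One small arithmetic quibble in your obstruction paragraph: two disjoint diameters of $C_j$ confine at most $2\cdot\binom{\lceil j/2\rceil+1}{2}\approx j^2/4$ within-diameter pairs, not $3j^2/8$; this does not change your (correct) conclusion that per-cycle bounds already saturate or exceed the budget once two or more transverse diameters are involved.
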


We proceed in three steps, which logically follows each other.

\paragraph{Step 1.} Compute as many values of $\lambda(n)$ as possible and compare them to $\frac{n\sqrt{n}}{\sqrt{6}}$.


\paragraph{Outline of the search algorithm.} 
To compute $\lambda(n)$, we run a recursive depth-first search, labeling cycles in the order of decreasing length. 
The upper bound $\mathit{max}$ on the total number of labels is a global variable.
The recursive function $\mathsf{labelCycle}(j,L,D)$ gets the length $j$ of the cycle to label, the set $L$ of existing labels, and the table $D$ of known distances between them. 
The function runs an optimized search over all subsets of $L$.
When it finds a subset $X$ that is both
\begin{itemize}
    \item \emph{compatible}: all labels from $X$ can be assigned to the nodes of $C_j$ respecting the distances from $D$, and
    \item \emph{large}: labeling $C_j$ with $X\cup Y$, where the set $Y$ of labels is disjoint with $L$, holds the total number of labels below the upper bound $\mathit{max}$,
\end{itemize}
it labels $C_j$ with $X\cup Y$, adds $Y$ to $L$ to get some $L'$, adds newly defined distances to $D$ to get some $D'$, and compares $j$ to 3. 
If $j=3$, the function reports $(L',D')$, sets $\mathit{max}=\#L'$, and returns; otherwise, it calls $\mathsf{labelCycle}(j{-}1,L',D')$. The value of $\mathit{max}$ in the end of search is reported as $\lambda(n)$.

\begin{wrapfigure}[23]{R}{0.6\textwidth} 
    \vspace*{-2mm}
    \includegraphics[scale=0.5]{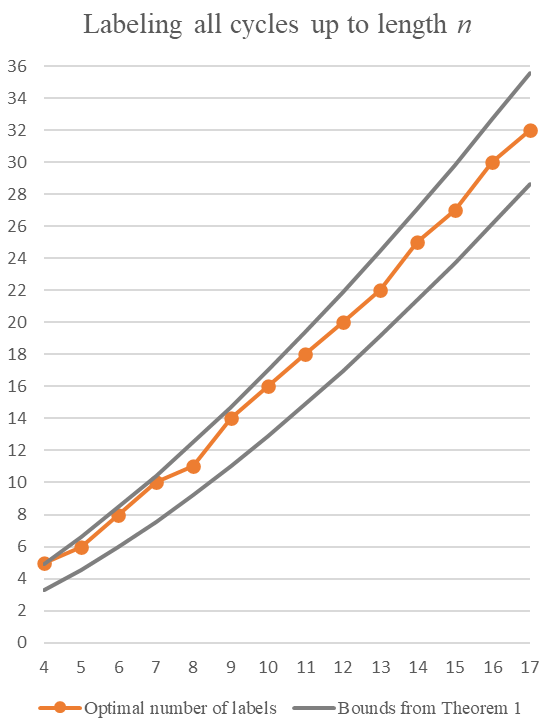}
    \vspace*{-3mm}
    \caption{Bound from Theorem~\ref{t:chain} versus the optimal numbers $\lambda(n)$ of labels. The lower and upper bounds are $\frac{n\sqrt{n}}{\sqrt{6}}$ and $\frac{n(\sqrt{n}+1)}{\sqrt{6}}$ respectively.}
    \label{f:optimal}
\end{wrapfigure}
\paragraph{Results.} We managed to find $\lambda(n)$ for $n\le 17$; for $n=17$ the algorithm made over $5\cdot10^{12}$ recursive calls.
Computing $\lambda(18)$ would probably require a cluster. 
The witness labelings can be found in the Appendix; the numbers $\lambda(n)$ fit well in between the bounds $\frac{n\sqrt{n}}{\sqrt{6}}$ and $\frac{n(\sqrt{n}+1)}{\sqrt{6}}$ (see Fig.~\ref{f:optimal}). 
As a side result we note that almost all optimal labelings we discovered are \emph{arc labelings}.

If we view the ``corridor'' in Fig.~\ref{f:optimal} as an extrapolation for $\lambda(n)$ for big $n$, we have to refer $ch(n)$ to this corridor.

\paragraph{Step 2.} Estimate the constant in the $O(n)$ term in Theorem~\ref{t:chain}, to compare $ch(n)$ to the results of step 1. 

\paragraph{Results.} We computed $ch(n)$ for many values of $n$ in the range $[10^3..10^7]$. In all cases,  $ch(n)\approx\frac{n(\sqrt{n}+1.5)}{\sqrt{6}}$, which is only $\frac{n}{2\sqrt{6}}$ away from the ``corridor'' in Fig.~\ref{f:optimal}.

The natural next question is whether we can do better. 

\paragraph{Step 3.} Find resources to improve the chain scheme to come closer to $\lambda(n)$.

In order to reduce the amount of resources ``wasted'' by the chain scheme, we describe three improving tricks. 
An example of their use is an optimal labeling of $\cC_{14}$ presented in Fig.~\ref{f:14}.

\paragraph{Trick 1: reusing ends of arcs.} During a phase, if a cycle is labeled with the strings $a_1\cdots a_j$ from the new arc and $c_x\cdots c_{x+j}$ or $c_x\cdots c_{x+j+1}$ from the chain, then it is correct to use for the next cycle the string $c_{x-j+1}\cdots c_x$ (resp., $c_{x-j}\cdots c_x$), thus reusing the label $c_x$; for example see $C_{13}$ and $C_{12}$ in Fig.~\ref{f:14}.

\begin{figure}[!htb]
    \centering
    \includegraphics[scale=0.77, trim = 25 610 174 20, clip]{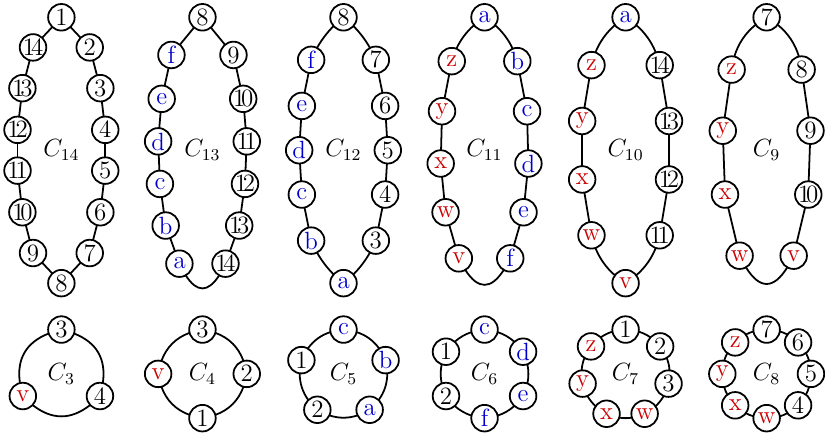}
    \vspace*{-2mm}
    \caption{An optimal labeling of $\cC_{14}$ by the enhanced chain scheme.}
    \vspace*{-2mm}
    \label{f:14}
\end{figure}

The function $\labl()$ is defined so that the above situation happens only in the beginning of the phase, so this trick saves 1 or 2 labels in the chain. Still, sometimes this  leads to labeling an additional cycle during a phase. 

\paragraph{Trick 2: using chain remainders.} In the end of a phase, we memorize the remainder $c$ of the chain and the current arc $a$. 
Thus, at any moment we have the set $S$ of such pairs of strings (initially empty). 
Now, before labeling a cycle we check whether $S$ contains a pair $(c,a)$ that can label this cycle. 
If yes, we extract $(c,a)$ from $S$ and run a ``mini-phase'', labeling successive cycles with $c$ as the arc and $a$ as the chain; when the mini-phase ends, with $a'$ being the chain remainder, we put the pair $(a',c)$ in $S$ and proceed to the next cycle.
Otherwise, we label the current cycle as usual. 
In Fig.~\ref{f:14}, the pair $(12,abcdef)$ was added to $S$ after phase 2.
Later, the cycles $C_6$ and $C_5$ were labeled during a mini-phase with this pair; note that trick 1 was used to label $C_5$.

\paragraph{Trick 3: two-pass phase.} We combine the last two phases as follows. 
Let $a$ be the arc in the penultimate phase and the prefix $a'$ of $a$ was unused for labeling the last cycle during the phase. 
As this cycle consumed $|a|-|a'|$ labels from the arc, for the last phase we need the arc of the same (up to a small constant) length. 
We create such an arc of the form $\hat aa'$, where the labels from $\hat a$ are new (if $|a'|>|a|/2$, no new labels are needed). 
During the last phase, we reverse both the arc and the chain: the chain $c$ is cut from the beginning, and the arc $\hat aa'$ is cut from the end. 
In this way, the labels from $a'$ will not meet the labels from $c$ for the second time, so the phase will finish correctly. 
An additional small trick can help sometimes: for a cycle, we use one less symbol from the arc than the maximum possible. This charges the chain by an additional label but this label can be reused from the previous cycle by employing trick 1. In Fig.~\ref{f:14}, this was done for $C_8$. As a result, $a'=v$ did not meet $1,\ldots,6$ from the chain and we were able to label $C_4$ and $C_3$ with no additional labels.

\vspace*{-2mm}
\paragraph{Results.} We applied the \emph{enhanced chain scheme}, which uses tricks 1--3, for labeling many families $\cC_n$ for $n\in [10^3..10^7]$. 
In all cases, we get the number of labels $ch^+(n)\approx \frac{n(\sqrt{n}+1)}{\sqrt{6}}$, which is exactly the upper bound of the corridor in Fig.~\ref{f:optimal}. 
In Table~\ref{tab:comparison}, we compare the results of chain schemes to the known optima.

\begin{table}[!htb]
\tabcolsep=3pt
\centering
\begin{tabular}{|c|c|c|c|c|}
\hline
$n$ & $ch(n)$ & $ch^+(n)$ & $ch^{++}(n)$& $\lambda(n)$ \\
\hline
7 & 10 & 10 & 10 & 10\\
8 & 13 & 12 & 11 & 11\\
9 & 14 & 14 & 14 & 14\\
10 & 17 & 17 & 16 & 16\\
11 & 19 & 18 & 18 & 18\\
12 & 21 & 21 & 20 & 20\\
\hline
\end{tabular} 
\hskip5mm
\begin{tabular}{|c|c|c|c|c|}
\hline
$n$ & $ch(n)$ & $ch^+(n)$ & $ch^{++}(n)$& $\lambda(n)$ \\
\hline
13 & 24 & 23 & 22 & 22\\
14 & 28 & 25 & 25 & 25\\
15 & 30 & 29 & 28 & 27\\
16 & 33 & 33 & 31 & 30\\
17 & 35 & 34 & 33 & 32\\
18 & 38 & 38 & 36 & 35?\\
\hline
\end{tabular} 
\vspace*{2mm}
\caption{Comparison of chain and optimal labelings for small $n$. The columns $ch(n)$, $ch^+(n)$, $\lambda(n)$ contain the number of labels for the chain scheme, enhanced chain scheme, and the optimal labeling, respectively; $ch^{++}(n)$ refers to the hybrid scheme: we perform the enhanced chain scheme till the two-pass phase, and replace this phase by a search for an optimal labeling of remaining cycles.}
\vspace*{-4mm}
\label{tab:comparison}
\end{table}

Overall, the results gathered in Fig.~\ref{f:optimal} and Table~\ref{tab:comparison} together with the behavior of $ch(n)$ and $ch^+(n)$ for big $n$ give a substantial support to Conjecture~\ref{con:opt}.

\section{Discussion and Future Work}

The main open problem for distance labeling of the families $\cC_n$ is the gap between the lower bound $\lambda(n)=\Omega(n\sqrt[3]{n})$ and the upper bound $\lambda(n)=O(n\sqrt{n})$. 
Our results suggest that the upper bound provides the correct asymptotics but improving the lower bound probably needs some new approach. 

This is pretty alike the situation with distance labeling of planar graphs. Here, the gap (in terms of the length of a label in bits, i.e., in logarithmic scale) is between $\Omega(\sqrt[3]{n})$ and $O(\sqrt{n})$ and there is an evidence \cite{AGMW18, GaUz23} that the upper bound gives the correct asymptotics but the existing approach does not allow to improve the lower bound. Another similar gap between the cubic root and the quadratic root appears in the adjacency labeling problem for $\cC_n$ \cite{AAHKS20}. 

As a possible approach to the improvement of the lower bound for $\lambda(n)$ we propose to study the number $\lambda_k(n)$ of labels needed to label the family $\cC_{n,k}=\{C_n,C_{n-1},\ldots,C_{n-k+1}\}$, starting from small $k$. Algorithm~\ref{alg:2arc} and Lemma~\ref{l:diam} imply $\lambda_2(n)=\lambda_3(n)=1.5n+O(1)$ but already the next step is not completely trivial.

\bibliographystyle{splncs04}
\bibliography{bib}

\begin{thebibliography}{10}
\providecommand{\url}[1]{\texttt{#1}}
\providecommand{\urlprefix}{URL }
\providecommand{\doi}[1]{https://doi.org/#1}

\bibitem{AGMW18}
Abboud, A., Gawrychowski, P., Mozes, S., Weimann, O.: Near-optimal compression
  for the planar graph metric. In: Proceedings of the Twenty-Ninth Annual
  {ACM-SIAM} Symposium on Discrete Algorithms, {SODA} 2018. pp. 530--549.
  {SIAM} (2018)

\bibitem{AAKMR06}
Abiteboul, S., Alstrup, S., Kaplan, H., Milo, T., Rauhe, T.: Compact labeling
  scheme for ancestor queries. {SIAM} J. Comput.  \textbf{35}(6),  1295--1309
  (2006)

\bibitem{AAHKS20}
Abrahamsen, M., Alstrup, S., Holm, J., Knudsen, M.B.T., St{\"{o}}ckel, M.:
  Near-optimal induced universal graphs for cycles and paths. Discret. Appl.
  Math.  \textbf{282},  1--13 (2020)

\bibitem{AGHPo16}
Alstrup, S., G{\o}rtz, I.L., Halvorsen, E.B., Porat, E.: Distance labeling
  schemes for trees. In: 43rd International Colloquium on Automata, Languages,
  and Programming, {ICALP} 2016. LIPIcs, vol.~55, pp. 132:1--132:16. Schloss
  Dagstuhl - Leibniz-Zentrum f{\"{u}}r Informatik (2016)

\bibitem{BaGa09}
Bazzaro, F., Gavoille, C.: Localized and compact data-structure for
  comparability graphs. Discret. Math.  \textbf{309}(11),  3465--3484 (2009)

\bibitem{Breuer66}
Breuer, M.A.: Coding the vertexes of a graph. IEEE Trans. on Information Theory
   \textbf{IT--12},  148--153 (1966)

\bibitem{BrFo67}
Breuer, M.A., Folkman, J.: An unexpected result on coding vertices of a graph.
  J. Mathematical Analysis and Applications  \textbf{20},  583--600 (1967)

\bibitem{CKM10}
Cohen, E., Kaplan, H., Milo, T.: Labeling dynamic {XML} trees. {SIAM} J.
  Comput.  \textbf{39}(5),  2048--2074 (2010)

\bibitem{EGP03}
Eilam, T., Gavoille, C., Peleg, D.: Compact routing schemes with low stretch
  factor. J. Algorithms  \textbf{46}(2),  97--114 (2003)

\bibitem{FGNW17}
Freedman, O., Gawrychowski, P., Nicholson, P.K., Weimann, O.: Optimal distance
  labeling schemes for trees. In: Proceedings of the {ACM} Symposium on
  Principles of Distributed Computing, {PODC} 2017. pp. 185--194. {ACM} (2017)

\bibitem{GaPa08}
Gavoille, C., Paul, C.: Optimal distance labeling for interval graphs and
  related graph families. {SIAM} J. Discret. Math.  \textbf{22}(3),  1239--1258
  (2008)

\bibitem{GaPe03}
Gavoille, C., Peleg, D.: Compact and localized distributed data structures.
  Distributed Comput.  \textbf{16}(2-3),  111--120 (2003)

\bibitem{GPPR04}
Gavoille, C., Peleg, D., P{\'{e}}rennes, S., Raz, R.: Distance labeling in
  graphs. J. Algorithms  \textbf{53}(1),  85--112 (2004)

\bibitem{GaUz23}
Gawrychowski, P., Uznanski, P.: Better distance labeling for unweighted planar
  graphs. Algorithmica  \textbf{85}(6),  1805--1823 (2023)

\bibitem{GrPo72}
Graham, R.L., Pollak, H.O.: On embedding graphs in squashed cubes. In: Graph
  Theory and Applications, 1972. Lecture Notes in Mathematics, vol.~303, pp.
  99--110. Springer (1972)

\bibitem{KNR92}
Kannan, S., Naor, M., Rudich, S.: Implicit representation of graphs. {SIAM} J.
  Discrete Math.  \textbf{5}(4),  596--603 (1992)

\bibitem{Moon65}
Moon, J.W.: On minimal n-universal graphs. Glasgow Math. J.  \textbf{7}(1),
  32--33 (1965)

\bibitem{Peleg99}
Peleg, D.: Proximity-preserving labeling schemes and their applications. In:
  Graph-Theoretic Concepts in Computer Science, 25th International Workshop,
  {WG} '99. Lecture Notes in Computer Science, vol.~1665, pp. 30--41. Springer
  (1999)

\bibitem{ThZw01}
Thorup, M., Zwick, U.: Compact routing schemes. In: Proceedings of the
  Thirteenth Annual {ACM} Symposium on Parallel Algorithms and Architectures,
  {SPAA} 2001. pp. 1--10. {ACM} (2001)

\bibitem{Winkler83}
Winkler, P.M.: Proof of the squashed cube conjecture. Combinatorica
  \textbf{3},  135--139 (1983)

\end{thebibliography}

\newpage
\section*{Appendix}

\subsection*{Examples of optimal labelings of $\cC_n$ for small $n$}

Labels are nonnegative integers.

\tabcolsep=1.5pt
\noindent
{\footnotesize
\begin{tabular}{l*{17}{c}}
$C_{17}:$ & 0&1&2&3&4&5&6&7&8&9&10&11&12&13&14&15&16 \\
$C_{16}:$ & 0&1&2&3&4&5&6&7&8&17&18&19&20&21&22&23\\
$C_{15}:$ & 0&1&2&24&25&26&27&28&29&30&19&20&21&22&23 \\
$C_{14}:$ & 0&1&2&24&25&26&27&28&31&12&13&14&15&16 \\
$C_{13}:$ & 12&13&14&15&16&17&18&19&20&21&22&23&31 \\
$C_{12}:$ &9&10&11&12&31&23&22&21&20&19&30&29 \\
$C_{11}:$ &7&8&9&10&11&12&31&28&27&26&25 \\
$C_{10}:$ &7&8&17&18&30&29&28&27&26&25 \\
$C_{9}:$ &2&3&4&5&6&27&26&25&24 \\
$C_{8}:$ &3&4&5&6&27&28&29&30 \\
$C_{7}:$ &8&9&10&11&24&18&17 \\
$C_{6}:$ &15&16&17&18&30&29 \\
$C_{5}:$ &13&14&15&29&30 \\
$C_{4}:$ &3&4&5&31&&&&&&&\multicolumn{3}{c}{\normalsize$n=17$} \\
$C_{3}:$ &5&6&31\\
\end{tabular} }
{\footnotesize
\begin{tabular}{l*{8}{c}}
$C_{7}:$ &0&1&2&3&4&5&6 \\
$C_{6}:$ &0&1&2&3&7&8 \\
$C_{5}:$ &5&4&3&7&8 \\
$C_{4}:$ &5&6&7&8 \\
$C_{3}:$ &0&1&9&&\multicolumn{3}{c}{\normalsize$n=7$}\\
&\\
$C_{8}:$ &0&1&2&3&4&5&6&7 \\
$C_{7}:$ &0&1&2&3&8&9&10 \\
$C_{6}:$ &5&4&3&8&9&10 \\
$C_{5}:$ &5&6&7&9&10 \\
$C_{4}:$ &0&7&9&10 \\
$C_{3}:$ &6&7&8&&\multicolumn{3}{c}{\normalsize$n=8$}\\
\end{tabular} }

\noindent
{\footnotesize
\begin{tabular}{l*{16}{c}}
$C_{16}:$ & 0&1&2&3&4&5&6&7&8&9&10&11&12&13&14&15 \\
$C_{15}:$ & 0&16&17&18&19&20&21&22&23&24&25&26&27&28&29\\
$C_{14}:$ &0&1&2&3&4&5&6&7&21&20&19&18&17&16 \\
$C_{13}:$ &0&1&2&3&4&5&6&24&25&26&27&28&29 \\
$C_{12}:$ &0&15&14&13&12&11&10&20&19&18&17&16 \\
$C_{11}:$ &7&8&9&10&11&12&25&26&27&28&29 \\
$C_{10}:$ &0&15&14&13&12&25&26&27&28&29 \\
$C_{9}:$ &12&13&14&15&21&22&23&24&25 \\
$C_{8}:$ &9&10&11&12&25&24&23&22 \\
$C_{7}:$ &6&7&8&9&22&23&24 \\
$C_{6}:$ &1&2&3&4&22&23 \\
$C_{5}:$ &7&8&9&20&21 \\
$C_{4}:$ &4&5&23&22&&&&&&&\multicolumn{3}{c}{\normalsize$n=16$} \\
$C_{3}:$ &8&9&16\\
\end{tabular} }
{\footnotesize
\begin{tabular}{l*{9}{c}}
$C_{9}:$ &0&1&2&3&4&5&6&7&8 \\
$C_{8}:$ &0&1&2&3&4&9&10&11 \\
$C_{7}:$ &7&6&5&4&9&10&11 \\
$C_{6}:$ &0&1&2&3&12&13 \\
$C_{5}:$ &5&4&3&12&13 \\
$C_{4}:$ &5&6&7&13&&&\multicolumn{3}{c}{\normalsize$n=9$} \\
$C_{3}:$ &8&7&13\\
\end{tabular} }

\noindent
{\footnotesize
\begin{tabular}{l*{15}{c}}
$C_{15}:$ & 0&1&2&3&4&5&6&7&8&9&10&11&12&13&14\\
$C_{14}:$ &0&1&15&16&17&18&19&20&21&22&23&24&25&26 \\
$C_{13}:$ &0&14&13&12&11&10&9&21&22&23&24&25&26 \\
$C_{12}:$ &0&1&15&16&17&18&19&10&11&12&13&14 \\
$C_{11}:$ &2&3&4&5&6&15&16&17&18&19&20 \\
$C_{10}:$ &6&7&8&9&10&19&18&17&16&15 \\
$C_{9}:$ &5&6&7&8&9&21&22&23&24 \\
$C_{8}:$ &0&1&2&3&4&24&25&26 \\
$C_{7}:$ &2&3&4&23&22&21&20 \\
$C_{6}:$ &5&6&7&8&25&26 \\
$C_{5}:$ &9&10&19&20&21 \\
$C_{4}:$ &5&24&25&26&&&&&&&\multicolumn{3}{c}{\normalsize$n=15$} \\
$C_{3}:$ &4&5&24\\
\end{tabular} }
{\footnotesize
\begin{tabular}{l*{10}{c}}
$C_{10}:$ &0&1&2&3&4&5&6&7&8&9 \\
$C_{9}:$ &0&1&2&10&11&12&13&14&15 \\
$C_{8}:$ &0&9&8&7&6&13&14&15 \\
$C_{7}:$ &2&3&4&5&12&11&10 \\
$C_{6}:$ &3&4&5&6&13&14 \\
$C_{5}:$ &5&6&7&11&12 \\
$C_{4}:$ &3&4&5&15&&&&\multicolumn{3}{c}{\normalsize$n=10$} \\
$C_{3}:$ &3&14&15\\
\end{tabular} }

\noindent
{\footnotesize
\begin{tabular}{l*{14}{c}}
$C_{14}:$ &0&1&2&3&4&5&6&7&8&9&10&11&12&13\\
$C_{13}:$ &0&1&14&15&16&17&18&19&20&21&22&23&24 \\
$C_{12}:$ &0&1&2&3&4&5&6&20&21&22&23&24 \\
$C_{11}:$ &0&13&12&11&10&9&20&21&22&23&24 \\
$C_{10}:$ &0&1&14&15&16&17&10&11&12&13 \\
$C_{9}:$ &1&2&3&4&5&17&16&15&14 \\
$C_{8}:$ &5&6&7&8&9&19&18&17 \\
$C_{7}:$ &6&7&8&9&14&15&16 \\
$C_{6}:$ &3&4&5&17&18&19 \\
$C_{5}:$ &6&7&8&21&20 \\
$C_{4}:$ &2&3&19&18&&&&&&&\multicolumn{3}{c}{\normalsize$n=14$} \\
$C_{3}:$ &7&8&22\\
\end{tabular} }
{\footnotesize
\begin{tabular}{l*{11}{c}}
$C_{11}:$ &0&1&2&3&4&5&6&7&8&9&10 \\
$C_{10}:$ &0&1&2&11&12&13&14&15&16&17 \\
$C_{9}:$ &0&10&9&8&7&14&15&16&17 \\
$C_{8}:$ &2&3&4&5&6&13&12&11 \\
$C_{7}:$ &3&4&5&6&14&15&16 \\
$C_{6}:$ &6&7&8&11&12&13 \\
$C_{5}:$ &8&9&10&12&11 \\
$C_{4}:$ &3&4&5&17&&&&\multicolumn{3}{c}{\normalsize$n=11$} \\
$C_{3}:$ &3&16&17\\
\end{tabular} }

\noindent
{\footnotesize
\begin{tabular}{l*{13}{c}}
$C_{13}:$ &0&1&2&3&4&5&6&7&8&9&10&11&12 \\
$C_{12}:$ &0&1&2&3&4&5&13&14&15&16&17&18 \\
$C_{11}:$ &6&7&8&9&10&11&17&18&19&20&21 \\
$C_{10}:$ &6&13&14&15&16&17&18&19&20&21 \\
$C_{9}:$ &6&7&8&9&10&16&15&14&13 \\
$C_{8}:$ &0&1&2&3&19&20&21&12 \\
$C_{7}:$ &3&4&5&6&21&20&19 \\
$C_{6}:$ &10&11&12&14&15&16 \\
$C_{5}:$ &0&12&11&17&18 \\
$C_{4}:$ &10&11&17&16&&&&&&\multicolumn{3}{c}{\normalsize$n=13$} \\
$C_{3}:$ &5&6&13\\
\end{tabular} }
{\footnotesize
\begin{tabular}{l*{12}{c}}
$C_{12}:$ &0&1&2&3&4&5&6&7&8&9&10&11 \\
$C_{11}:$ &0&1&2&12&13&14&15&16&17&18&19 \\
$C_{10}:$ &0&11&10&9&8&7&16&17&18&19 \\
$C_{9}:$ &3&4&5&6&7&16&17&18&19 \\
$C_{8}:$ &2&3&4&5&6&14&13&12 \\
$C_{7}:$ &6&7&8&9&12&13&14 \\
$C_{6}:$ &9&10&11&14&13&12 \\
$C_{5}:$ &6&7&16&15&14 \\
$C_{4}:$ &3&4&5&15&&&&&\multicolumn{3}{c}{\normalsize$n=12$} \\
$C_{3}:$ &8&9&15\\
\end{tabular} }

\end{document}